\title{A class of linear sets in $\PG(1,q^5)$}
\author{Maria Montanucci - Corrado Zanella}
\newcommand{\cB}{{\mathcal B}}
\newcommand{\Lb}{{\mathbb L}}
\newcommand{\cQ}{{\mathcal Q}}
\newcommand{\F}{{\mathbb F}}
\newcommand{\Fq}{{\F_q}}
\newcommand{\Fqq}{{\F_{q^5}}}
\newcommand{\Fqs}{{\F_{q^5}^*}}
\newcommand{\la}{\langle}
\newcommand{\ra}{\rangle}
\newcommand{\N}{\NNN_{q^5/q}}
\newcommand{\Tr}{\Tra_{q^5/q}}
\newtheorem{theorem}{Theorem}[section]
\newtheorem{lemma}[theorem]{Lemma}
\newtheorem{proposition}[theorem]{Proposition}
\DeclareMathOperator{\Tra}{Tr}
\DeclareMathOperator{\NNN}{N}
\DeclareMathOperator{\PG}{{PG}}
\DeclareMathOperator{\PGL}{{PGL}}
\DeclareMathOperator{\PGaL}{P\Gamma L}
\DeclareMathOperator{\GaL}{\Gamma L}
\DeclareMathOperator{\hht}{ht}
\theoremstyle{definition}%% Hans
\newtheorem{remark}[theorem]{Remark}
\newtheorem{question}[theorem]{Question}
\begin{document}
\maketitle

\begin{abstract}
The maximum scattered linear sets in $\PG(1,q^n)$ have been completely classified
for $n\le 4$ \cite{CsZa2018,LaVdV2010}.
Here a wide class of linear sets  in $\PG(1,q^5)$ is studied
which depends on two parameters.
Conditions for the existence, in this class,
of possible new maximum scattered linear sets in $\PG(1,q^5)$
are exhibited.
\end{abstract}

\bigskip
{\it AMS subject classification:} 51E20, 05B25

\bigskip
{\it Keywords:} Linear set, Finite projective line, Subgeometry, Finite projective space

\section{Introduction}

A point in $\PG(1,q^t)$ is the $\F_{q^t}$-span $\la {\bf v}\ra_{\F_{q^t}}$ of a nonzero vector
${\bf v}$ in a two-dimensional vector space, say $W$, over $\F_{q^t}$. If $U$ is a subspace over $\Fq$ of $W$, then
$L_U=\{\la {\bf v}\ra_{\F_{q^t}}\colon {\bf v}\in U\setminus\{ {\bf 0}\}\}$ denotes the associated
\emph{$\Fq$-linear set} (or simply \emph{linear set}) in $\PG(1,q^t)$. The \emph{rank} of such a linear set is $r=\dim_{\Fq}U$.
Any linear set in $\PG(1,q^t)$ of rank greater than $t$ coincides with the whole projective line.
The \emph{weight} of a point $P=\la {\bf v}\ra_{\F_{q^t}}$ of $L_U$ is 
$w_{L_U}(P)=\dim_{\Fq}(U\cap P)$.
If the rank and the size of $L_U$ are $r$ and $(q^r-1)/(q-1)$, respectively, then $L_U$ is \emph{scattered}.
Equivalently, $L_U$ is scattered if and only if all its points have weight one.
A scattered $\Fq$-linear set of rank $t$ in $\PG(1,q^t)$ is \emph{maximum scattered} (MSLS for short).
For any $\varphi\in\GaL(2,q^t)$ with related collineation $\tilde\varphi\in\PGaL(2,q^t)$ and any $\Fq$-linear set $L_U$, $L_{U^\varphi}=(L_U)^{\tilde\varphi}$.
As it was showed in \cite{CsZa2016a}, the converse is not true;
that is, there are examples of MSLSs $L_U=L_V\subseteq\PG(1,q^t)$ such that no
$\varphi\in\GaL(2,q^t)$ exists satisfying $U^\varphi=V$.
See also \cite{CsMaPo2018} on the problem of the $\GaL$-equivalence of the $\Fq$-subspaces
underlying to linear sets.

Up to our knowledge, only three types of MSLS in $\PG(1,q^5)$ are known:
\begin{itemize}
\item The  \emph{linear set of pseudoregulus type} 
$L_0=\{\langle(u,u^q)\rangle_{\mathbb{F}_{q^5}} \colon u\in\F_{q^5}^*\}$;
see \cite{CsZa2016} for a geometric description.
\item $L_1^\eta$ and $L_2^\eta$, where
\[
L_{s}^\eta=\{\langle (x,\eta x^{q^s}+x^{q^{5-s}}) \rangle_{\mathbb{F}_{q^5}} \ :\  x \in \mathbb{F}_{q^5}^*\},\ \N(\eta) \not\in \{0,1\}.
\]
They were constructed by Lunardon-Polverino \cite{LuPo2001} for $s=1$  and by Sheekey \cite{Sh2006} for $s=2$ 
(see also \cite{LuTrZh2015}.)
\end{itemize}
For any $\eta,\eta'$ with $\N(\eta),\N(\eta')^5\not\in\{0,1\}$, $L_1^\eta$ and $L_2^{\eta'}$ are not
$\PGaL(2,q^5)$-equivalent \cite[Theorem 5.5]{CsMaPo2018a}.
%The values $s=3$ and $s=4$ give linear sets equivalent to $s=2$ and $s=1$, respectively 
%\cite{CsMaPo2018}.
The aim of this paper is to find algebraic conditions for possible new examples
that on the other hand could also serve to prove their nonexistence.
Up to our knowledge, the problem of the classification of the MSLSs in $\PG(1,q^5)$ remains open.

In Sect.\ \ref{cforms}, a canonical form $L_{\alpha,\beta}$ is found for a wide class of 
linear sets in $\PG(1,q^5)$.
Based on the representation given in \cite[Theorems 1 and 2]{LuPo2004}, any linear set
$\Lb$ of rank five
in  $\PG(1,q^5)$ can be obtained as the projection
of a canonical subgeometry $\Sigma\cong\PG(4,q)$ 
 from a plane $\Lambda$ of $\PG(4,q^5)$ such that $\Lambda\cap\Sigma=\emptyset$.
Let $\sigma$ denote a generator of the collineation group fixing $\Sigma$ pointwise.
As a consequence of  \cite[Theorem 2.3]{CsZa2016}, assuming that the linear set $\Lb$ is maximum
scattered, it is a linear set of pseudoregulus type if and only if at least one of the intersections
$\Lambda\cap\Lambda^\sigma$ and $\Lambda\cap\Lambda^{\sigma^2}$ is not a point.
So it is assumed that $P=\Lambda\cap\Lambda^\sigma$ is a point.
Adding the assumption that the projective closure 
$\overline{P,P^\sigma,P^{\sigma^2},P^{\sigma^3},P^{\sigma^4}}$
is equal to $\PG(4,q^5)$ leads to the algebraic form
(\ref{e:canonicasi}) 
$L_{\alpha,\beta}=\{\la(x-\alpha x^{q^2},x^q-\beta x^{q^2})\ra_\Fqq\colon x\in\Fqs\}$ for $\Lb$.

Sects. 3 and 4 are based on the interpretation of algebraic equations in one unknown in $\Fqq$
as algebraic varieties in $\mathbb A^5(\Fq)$.
More precisely, taking a basis $\cB$ of $\Fqq$ over $\Fq$, from $f(x)=0$ a set of five equations
is obtained by equating to zero the coordinates of $f(x)$ with respect to $\cB$.

In Sect.\ 3 it is shown that asymptotically there are no MSLSs of type $L_{0,\beta}$.
This is consequence of a stronger result (Lemma \ref{lem1}), stating that for $q\ge223$ any element
of $\mathbb{F}_{q^5}^*$ is equal to $(uv^q-u^qv)/(u^{q^2}v-uv^{q^2})$ for 
some $u,v \in \mathbb{F}_{q^5}^*$ such that $\dim \langle u,v \rangle_{\mathbb{F}_q}=2$.
The proof is achieved by proving the existence of $\Fq$-rational points of the degree 5 hypersurface
(\ref{eq9}) in $\mathbb A^5(\Fq)$ not lying on a special hyperplane.
This is based on a recent bound by Slavov \cite{Sl2017}
for $\Fq$-rational points  on hypersurfaces (see Prop.\ \ref{prelvar}).
An exhaustive computer search allowed to extend such result also to $q\le17$.

Any MSLS of type $L_{\alpha,0}$ is of Lunardon-Polverino type.
If $\alpha^q=\beta^{q+1}$, then either $L_{\alpha,\beta}$ is of pseudoregulus type, or it has 
rank less than five (Prop.\ \ref{equivPSE}).
Motivated by this, in Sects.\ 4 and 5 MSLSs $L_{\alpha,\beta}$ are
dealt with under the assumption $\alpha\beta\neq0$.

Prop.\ \ref{nonMS} states that any MSLS of type $L_{\alpha,\beta}$
satisfies the condition $\alpha^q/\beta^{q+1}\in\Fq$.
This is a consequence of the existence of $\Fq$-rational points on a special
quartic curve $\cQ$ \eqref{quartica}.
In order to prove that, $\cQ$ is shown to be irreducible, allowing to apply
the Hasse-Weil bound.
No $L_{\alpha,\beta}$ with $\alpha\beta\neq0$ is of Lunardon-Polverino type (Lemma \ref{equivLP}).
A necessary and sufficient condition is proved for a MSLS $L_{\alpha,\beta}$ 
to be a Sheekey type linear set,
that for $q\le11$ is always satisfied (Theorem \ref{equivAll}).
The proof is based on the results by Csajb\'ok, Marino and Polverino
\cite[Theorem 5.4]{CsMaPo2018a}, implying that if a linear set $L_U$ is $\PGaL(2,q^5)$-equivalent
to a Sheekey's $L_2^\eta$, then $U$ is $\GaL(2,q^5)$-equivalent to the underlying $\Fq$-subspace
of a (possibly different) Sheekey linear set.

\section{Canonical forms}\label{cforms}

Let $\Sigma\cong\PG(4,q)$ be an $\Fq$-canonical subgeometry of $\PG(4,q^5)$;
that is, the set of all points of $\PG(4,q^5)$ having coordinates rational over $\Fq$
with respect to some projective reference system.
Furthermore, let $\sigma\in\PGaL(5,q^5)$
of order five fixing $\Sigma$ pointwise.
In this section $\Lb$ denotes a maximum scattered $\Fq$-linear set  in $\PG(1,q^5)$, not of pseudoregulus type.
By \cite{CsZa2016,LuPo2004}, $\Lb$ is the projection $p_\Lambda(\Sigma)$ 
with vertex a plane $\Lambda$ such that 
$\Lambda\cap\Sigma=\emptyset$,
and $\dim(\Lambda\cap\Lambda^\tau)=0$ for any generator $\tau$ of $\la\sigma\ra$.

The \emph{standard subgeometry} $\Sigma$ is the set of all points of type
\[
P_u=\la(u,u^q,u^{q^2},u^{q^3},u^{q^4})\ra_\Fqq,\ u\in\Fqs.
\]
and $P_u=P_v$ if and only if $u/v\in\Fq$. A possible choice for $\sigma$ is
\[
\sigma: \la(X_0,X_1,X_2,X_3,X_4)\ra_\Fqq\mapsto\la(X_4^q,X_0^q,X_1^q,X_2^q,X_3^q)\ra_\Fqq.
\]
The \emph{height} of a point $P$ with respect $\Sigma$, denoted by  $\hht P$, is the projective dimension of the
$\sigma$-cyclic subspace
$\overline{P,P^\sigma,P^{\sigma^2},P^{\sigma^3},P^{\sigma^4}}$
(\footnote{$\overline S$ denotes the projective closure of $S$.}).
Note that $\hht(\Lambda\cap\Lambda^\sigma)=\hht(\Lambda\cap\Lambda^{\sigma^4})$ and
$\hht(\Lambda\cap\Lambda^{\sigma^2})=\hht(\Lambda\cap\Lambda^{\sigma^3})$.

As usual, if $f(x)=\sum_{i=0}^4a_ix^{q^i}$ is a $q$-polynomial, then
\[ L_f=\{\la(x,f(x)\ra_{\Fqq}\colon x\in\Fqs\} \]
denotes the related linear set.

\begin{proposition}
There exists a $q$-polynomial $f(x)=\sum_{i=1}^4a_ix^{q^i}$ with $a_4=1$, such that $\Lb$ is
projectively equivalent to $L_f$,
or $\Lb$ is projectively equivalent to $L_g$ where $g(x)=ax^{q^2}+x^{q^3}$, $a\in\Fqs$,
$\N(a)\neq0,1$.
\end{proposition}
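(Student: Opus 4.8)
The plan is to realise $\Lb$ as a graph $L_f$, normalise the coefficients of $f$ by elementary projectivities, and show that the Sheekey shape is precisely the obstruction to the generic normal form.

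\textit{Step 1 (reduction to a graph).} Since $\Lb$ is scattered of rank five, $|\Lb|=(q^5-1)/(q-1)<q^5+1$, so some point $R$ of $\PG(1,q^5)$ lies off $\Lb$. Choosing $\varphi\in\GL(2,q^5)$ with $R^{\tilde\varphi}=\la(0,1)\ra_\Fqq$, the point $\la(0,1)\ra_\Fqq$ lies off $\Lb^{\tilde\varphi}$, so the underlying $\Fq$-subspace meets $\{0\}\times\Fqq$ trivially; as its $\Fq$-dimension equals that of $\Fqq$, it is the graph of an $\Fq$-linear map, i.e. $\Lb$ is projectively equivalent to $L_f$ with $f(x)=\sum_{i=0}^4 a_ix^{q^i}$ invertible. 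The transvection $\la(x,y)\ra_\Fqq\mapsto\la(x,y-a_0x)\ra_\Fqq$ replaces $f$ by $f-a_0\,\mathrm{id}$ and fixes $\la(0,1)\ra_\Fqq$, so I may assume $a_0=0$ and $f=\sum_{i=1}^4 a_ix^{q^i}$ with $f$ still invertible.

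\textit{Step 2 (the generic case).} If $a_4\neq0$, the map $\la(x,y)\ra_\Fqq\mapsto\la(x,a_4^{-1}y)\ra_\Fqq$ normalises the top coefficient to $1$ and gives the first form $f=\sum_{i=1}^4 a_ix^{q^i}$ with $a_4=1$. Hence everything is concentrated in the case $a_4=0$, where the plan is to transport the nonzero top coefficient into place by passing to the adjoint. Writing $\hat f=\sum_i a_i^{q^{5-i}}x^{q^{5-i}}$ for the adjoint of $f$ with respect to $(x,y)\mapsto\Tr(xy)$, one has $\hat a_0=a_0=0$ and $\hat a_4=a_1^{q^4}$, while $L_{\hat f}$ is $\PGaL(2,q^5)$-equivalent to $L_f$. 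Thus when $a_1\neq0$ I replace $f$ by $\hat f$, whose constant term still vanishes and whose top coefficient $a_1^{q^4}$ is nonzero, and reduce to the first form as before.

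\textit{Step 3 (the Sheekey case).} There remains $a_1=a_4=0$, that is $f=a_2x^{q^2}+a_3x^{q^3}$. Here I invoke the hypothesis that $\Lb$ is not of pseudoregulus type: a monomial $q$-polynomial produces a linear set of pseudoregulus type, so $a_2a_3\neq0$. The projectivities $\la(x,y)\ra_\Fqq\mapsto\la(x,dy)\ra_\Fqq$ and $\la(x,y)\ra_\Fqq\mapsto\la(cx,y)\ra_\Fqq$ send $(a_2,a_3)$ to $(dc^{-q^2}a_2,\,dc^{-q^3}a_3)$; choosing $d,c$ to make the coefficient of $x^{q^3}$ equal to $1$ yields $g(x)=ax^{q^2}+x^{q^3}$. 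A short computation shows these moves leave $\N(a)=\N(a_2/a_3)$ unchanged (since $\N(c^{q^3-q^2})=1$), and $L_g$ is scattered exactly when $\N(a)\notin\{0,1\}$; the scatteredness of $\Lb$ therefore forces $\N(a)\notin\{0,1\}$, which is the second form.

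\textit{Main obstacle.} The delicate point is Step 2. The condition $a_4=0$ has intrinsic meaning — it says the vertex plane $\Lambda_f=\{X_0=0,\ \sum_{i\ge1}a_iX_i=0\}$ contains the coordinate point $e_4$ of the $\sigma$-orbit $\{e_i\}$ with $\sigma(e_i)=e_{i+1}$ — and it cannot be removed by the elementary moves of Step 1, which preserve the monomial support of $f$. What makes the dichotomy clean is the equivalence $L_f\cong L_{\hat f}$ combined with the fact that the Sheekey shape is itself adjoint-stable ($\hat g$ is again $a'x^{q^2}+x^{q^3}$ with $\N(a')=\N(a)^{-1}$): this is exactly why $a_1=a_4=0$ is the only configuration escaping the first form. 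Pinning down the adjoint-equivalence is the step I would treat with most care; as an alternative I would extract the same conclusion directly from the non-pseudoregulus hypothesis, which forces $\Lambda_f\cap\Lambda_f^\sigma$ and $\Lambda_f\cap\Lambda_f^{\sigma^2}$ to be points and hence imposes the $\Fqq$-independence of $(a_2,a_3,a_4)$ and $(a_1^q,a_2^q,a_3^q)$, and of $(a_1,a_3,a_4)$ and $(a_4^{q^2},a_1^{q^2},a_2^{q^2})$.
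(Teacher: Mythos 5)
Your proposal is correct and follows essentially the same route as the paper: reduce to a graph $L_f$ with $a_0=0$, normalise $a_4$ to $1$ when $a_4\neq0$, pass to the adjoint $\hat f$ when $a_4=0$ but $a_1\neq0$, and invoke the non-pseudoregulus and scatteredness hypotheses to settle the residual case $f=a_2x^{q^2}+a_3x^{q^3}$. The two facts you assert without proof --- that $L_f$ and $L_{\hat f}$ define the same linear set, and that $L_g$ with $g=ax^{q^2}+x^{q^3}$ is scattered if and only if $\N(a)\notin\{0,1\}$ --- are precisely the results the paper cites (\cite[Lemma 2.6]{BaGiMaPo2018}, \cite[Lemma 3.1]{CsMaPo2018}, and \cite[Cor.~3.7]{BaZhou}); note that the latter is a genuine theorem rather than the ``short computation'' you suggest, and that your claim that $f$ is invertible is unjustified, though harmless since it is never used.
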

\begin{proof}
Up to projective equivalence, $\Lb=L_h$ with $h=\sum_{i=1}^4a_ix^{q^i}$ may be assumed.
If $a_4\neq0$ a further projectivity leads to $a_4=1$.
If $a_1\neq0$, then $\Lb=L_{\hat h}$ where $\hat h=\sum_{i=1}^4a_i^{q^{5-i}}x^{q^{5-i}}$
\cite[Lemma 2.6]{BaGiMaPo2018}, \cite[Lemma 3.1]{CsMaPo2018}, leading
once again to the desired form.
Finally, if $a_1=a_4=0$, then $a_2a_3\neq0$ since otherwise $\Lb$ would be of pseudoregulus type.
In this case $\N(a)\neq1$ is a necessary
and sufficient condition for the linear set to be scattered \cite[Cor.\ 3.7]{BaZhou}. 
\end{proof}

In the following, $O_0=\la(1,0,0,0,0)\ra_\Fqq$, $O_1=\la(0,1,0,0,0)\ra_\Fqq$, and so on.
\begin{proposition} \label{height Sh}
Let $g(x)=ax^{q^2}+x^{q^3}$, $a\in\Fqs$.
Then $L_g$ is the projection of the standard subgeometry from the vertex 
\[
\Lambda=\overline{O_1,O_4,\la(0,0,1,-a,0)\ra_\Fqq}.
\]
The intersection $\Lambda\cap\Lambda^{\sigma^i}$ is a point for any $i=1,2,3,4$.
Furthermore, $\Lambda\cap\Lambda^\sigma$ has height four if and only if $\N(a)^2-\N(a)+1\neq0$,
whereas $\Lambda\cap\Lambda^{\sigma^2}=O_1$ has height four for any $a\in\Fqs$.
\end{proposition}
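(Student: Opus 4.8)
The plan is to make the projection completely explicit and then reduce every assertion to linear algebra over $\Fqq$. First I would note that $\Lambda=\overline{O_1,O_4,\la(0,0,1,-a,0)\ra_\Fqq}$ is precisely the set of points whose coordinates satisfy $X_0=0$ and $aX_2+X_3=0$, so $\Lambda$ is the intersection of these two hyperplanes. The two linear forms $X_0$ and $aX_2+X_3$ vanish on $\Lambda$ and hence descend to coordinates on the quotient line, so the projection with vertex $\Lambda$ is $\la(X_0,X_1,X_2,X_3,X_4)\ra_\Fqq\mapsto\la(X_0,\,aX_2+X_3)\ra_\Fqq$. Evaluating at $P_u=\la(u,u^q,u^{q^2},u^{q^3},u^{q^4})\ra_\Fqq$ gives $\la(u,\,au^{q^2}+u^{q^3})\ra_\Fqq=\la(u,g(u))\ra_\Fqq$, so $p_\Lambda(\Sigma)=L_g$.

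Next I would compute the conjugate planes by applying $\sigma$ to the three generators. This shows that $\Lambda^\sigma$ is cut out by $X_1=0$ and $a^qX_3+X_4=0$, and $\Lambda^{\sigma^2}$ by $X_2=0$ and $X_0+a^{q^2}X_4=0$. Solving the corresponding $4\times4$ systems, the equations of $\Lambda$ and $\Lambda^\sigma$ force $X_0=X_1=0$, $X_3=-aX_2$, $X_4=a^{q+1}X_2$, whence $\Lambda\cap\Lambda^\sigma=\la(0,0,1,-a,a^{q+1})\ra_\Fqq$, a single point; the equations of $\Lambda$ and $\Lambda^{\sigma^2}$ force $X_0=X_2=X_3=X_4=0$, whence $\Lambda\cap\Lambda^{\sigma^2}=O_1$. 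Since $(\Lambda\cap\Lambda^{\sigma^{5-i}})^{\sigma^i}=\Lambda\cap\Lambda^{\sigma^i}$ and $\sigma$ is bijective, the cases $i=3,4$ are $\sigma$-images of the cases $i=2,1$ and are points as well.

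For $\Lambda\cap\Lambda^{\sigma^2}=O_1$ I would simply follow the $\sigma$-orbit: one checks $O_1^\sigma=O_2$, $O_2^\sigma=O_3$, $O_3^\sigma=O_4$, $O_4^\sigma=O_0$, so $\overline{O_1,O_1^\sigma,O_1^{\sigma^2},O_1^{\sigma^3},O_1^{\sigma^4}}$ is spanned by the five fundamental points and equals $\PG(4,q^5)$; hence $\hht(O_1)=4$ for every $a\in\Fqs$.

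The crux is the height of $P:=\la(0,0,1,-a,a^{q+1})\ra_\Fqq$. Writing $a_i=a^{q^i}$, I would apply $\sigma$ repeatedly to obtain representatives of $P,P^\sigma,P^{\sigma^2},P^{\sigma^3},P^{\sigma^4}$ and stack them as the rows of
\[
M=\begin{pmatrix}
0 & 0 & 1 & -a_0 & a_0a_1\\
a_1a_2 & 0 & 0 & 1 & -a_1\\
-a_2 & a_2a_3 & 0 & 0 & 1\\
1 & -a_3 & a_3a_4 & 0 & 0\\
0 & 1 & -a_4 & a_4a_0 & 0
\end{pmatrix},
\]
so that $\hht P=4$ exactly when $\det M\neq0$. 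The matrix is a Frobenius-twisted circulant: each row is the cyclic right-shift of the previous one followed by $a_i\mapsto a_{i+1}$, and in each row the nonzero entries occupy three consecutive cyclic positions. The main obstacle is to evaluate $\det M$. Expanding it as a signed sum over the admissible permutations (those choosing a nonzero entry in each row), I expect each surviving term to have degree $0$, $5$ or $10$ in the $a_i$ and, because the chosen indices always exhaust $\{0,1,2,3,4\}$, to equal $1$, $\pm\N(a)$ or $\N(a)^2$ respectively; collecting them should yield
\[
\det M=\N(a)^2-\N(a)+1.
\]
This gives $\hht P=4$ if and only if $\N(a)^2-\N(a)+1\neq0$, completing the proof.
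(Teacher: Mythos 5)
Your proposal is correct and takes essentially the same route as the paper: it uses the same vertex description, finds the same intersection points $\la(0,0,1,-a,a^{q+1})\ra_\Fqq$ and $O_1$, handles $i=3,4$ by applying $\sigma$, and reduces the height claim to the non-vanishing of the determinant of the same $5\times5$ matrix of $\sigma$-conjugates, which the paper likewise asserts (without displayed work) equals $\N(a)^2-\N(a)+1$. The one step you leave as an expectation does check out: there are exactly thirteen admissible permutations, the shift-by-two and shift-by-four ones contribute $1$ and $\N(a)^2$ respectively, and the eleven degree-five terms (whose $a$-indices are indeed forced by the permutation condition to exhaust $\{0,1,2,3,4\}$) have signed sum $5\N(a)-5\N(a)-\N(a)=-\N(a)$.
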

\begin{proof}
As regards the first assertion, just take into consideration the following singular matrix:
\begin{equation}\label{e:vertice}
\begin{pmatrix} 0&0&0&0&1\\ 0&0&1&-a&0\\ 0&1&0&0&0\\ u&u^q&u^{q^2}&u^{q^3}&u^{q^4}\\
u&0&0&au^{q^2}+u^{q^3}&0 \end{pmatrix}.
\end{equation}
Straightforward computations give $\dim(\overline{\Lambda\cup\Lambda^\sigma})=
\dim(\overline{\Lambda\cup\Lambda^{\sigma^2}})=4$.
The intersection $\Lambda\cap \Lambda^\sigma$ is the point $\la(0,0,1,-a,a^{q+1})\ra_\Fqq$,
and
\[
\det\begin{pmatrix} 0&0&1&-a&a^{q+1}\\ a^{q^2+q}&0&0&1&-a^q\\ -a^{q^2}&a^{q^3+q^2}&0&0&1\\
1&-a^{q^3}&a^{q^4+q^3}&0&0\\ 0&1&-a^{q^4}&a^{q^4+1}&0 \end{pmatrix}=\N(a)^2-\N(a)+1.\qedhere
\]
\end{proof}
The proof of the following is similar to Prop.\ \ref{height Sh}:
\begin{proposition}
The Lunardon-Polverino linear set $L_f$ with $f=ax^q+x^{q^4}$, $\N(a)\neq0,1$, 
is the projection of the standard subgeometry from the vertex 
\[
\Lambda=\overline{O_2,O_3,\la(0,1,0,0,-a)\ra_\Fqq}.
\]
The point $\Lambda\cap\Lambda^\sigma=O_3$ has height four, whereas $\Lambda\cap\Lambda^{\sigma^2}$ has height four
if and only if $\N(a)^2-\N(a)+1\neq0$.
\end{proposition}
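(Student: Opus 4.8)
The plan is to argue exactly as in the proof of Proposition~\ref{height Sh}. For the first assertion I would exhibit, for a generic point $P_u=\la(u,u^q,u^{q^2},u^{q^3},u^{q^4})\ra_\Fqq$ of the standard subgeometry, the singular matrix
\[
\begin{pmatrix} 0&0&1&0&0\\ 0&0&0&1&0\\ 0&1&0&0&-a\\ u&u^q&u^{q^2}&u^{q^3}&u^{q^4}\\ u&0&0&0&au^q+u^{q^4} \end{pmatrix},
\]
whose first three rows span $\Lambda$ and whose last row represents the point of $L_f$ with $x=u$. Its singularity is immediate, since the fifth row minus the fourth is the combination $-u^{q^2}$, $-u^{q^3}$, $-u^q$ of the first, second and third rows; this shows that the said point of $L_f$ lies on $\overline{\Lambda,P_u}$, whence $L_f=p_\Lambda(\Sigma)$.

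Next I would apply $\sigma$ coordinatewise to the three generators of $\Lambda$ to obtain
\[
\Lambda^\sigma=\overline{O_3,O_4,\la(-a^q,0,1,0,0)\ra_\Fqq},\qquad
\Lambda^{\sigma^2}=\overline{O_4,O_0,\la(0,-a^{q^2},0,1,0)\ra_\Fqq},
\]
and then solve the associated linear systems. Writing out the conditions for membership in both planes one finds $\Lambda\cap\Lambda^\sigma=O_3$ and $\Lambda\cap\Lambda^{\sigma^2}=\la(0,-a^{q^2},0,1,a^{q^2+1})\ra_\Fqq$, each of dimension zero, in accordance with $\dim(\overline{\Lambda\cup\Lambda^{\sigma^i}})=4$. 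The height of $O_3$ is settled at once: since $O_3^\sigma=O_4$, $O_3^{\sigma^2}=O_0$, $O_3^{\sigma^3}=O_1$ and $O_3^{\sigma^4}=O_2$, its $\sigma$-cyclic span is all of $\PG(4,q^5)$, so that $\hht O_3=4$ for every $a$.

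It remains to treat $Q=\la(0,-a^{q^2},0,1,a^{q^2+1})\ra_\Fqq$. By definition $\hht Q=4$ exactly when $Q,Q^\sigma,\dots,Q^{\sigma^4}$ are linearly independent, i.e.\ when
\[
\det\begin{pmatrix}
0 & -a^{q^2} & 0 & 1 & a^{q^2+1}\\
a^{q^3+q} & 0 & -a^{q^3} & 0 & 1\\
1 & a^{q^4+q^2} & 0 & -a^{q^4} & 0\\
0 & 1 & a^{q^3+1} & 0 & -a\\
-a^q & 0 & 1 & a^{q^4+q} & 0
\end{pmatrix}\neq0,
\]
the rows being the images of $Q$ under the powers of $\sigma$. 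The crux is the evaluation of this determinant. Every nonzero product of five entries, one from each row and column, simplifies by means of $a^{q^5}=a$ to a signed power of $\N(a)$, and collecting these contributions I expect to obtain precisely $\N(a)^2-\N(a)+1$, so that $\hht Q=4$ if and only if $\N(a)^2-\N(a)+1\neq0$. The main obstacle is this final bookkeeping step: one must check that the terms carrying $\N(a)^2$ and the constant each occur with total coefficient $+1$, while the several terms proportional to $\N(a)$ combine with net coefficient $-1$, exactly as happens in Proposition~\ref{height Sh}.
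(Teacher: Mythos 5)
Your proposal is correct and takes essentially the same route the paper intends: the paper gives no separate argument for this proposition, saying only that the proof is similar to Proposition~\ref{height Sh}, i.e.\ a singular-matrix verification of the projection claim plus the determinant of the matrix of $\sigma$-iterates of $\Lambda\cap\Lambda^{\sigma^2}$, which is exactly your plan, and your intermediate objects ($\Lambda^\sigma$, $\Lambda^{\sigma^2}$, and the two intersection points $O_3$ and $\la(0,-a^{q^2},0,1,a^{q^2+1})\ra_\Fqq$) are all computed correctly. The bookkeeping you defer does close: expanding your $5\times5$ matrix along its first column (nonzero entries in rows 2, 3 and 5 only) gives $-\N(a)\bigl(1-\N(a)\bigr)+\bigl(1-\N(a)\bigr)+\N(a)=\N(a)^2-\N(a)+1$, as claimed.
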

%\begin{proof}
%A matrix similar to (\ref{e:vertice}) leads to the desired form for $\Lambda$.
%Next,
%\[
%\Lambda\cap\Lambda^{\sigma^2}=\la(0,-a^{q^2},0,1,a^{q^2+1})\ra_\Fqq,
%\]
%and
%\begin{gather*}
%\det\begin{pmatrix} 0&-a^{q^2}&0&1&a^{q^2+1}\\ a^{q^3+q}&0&-a^{q^3}&0&1\\ 1&a^{q^4+q^2}&0&-a^{q^4}
%&0\\
%0&1&a^{q^3+1}&0&-a\\ -a^q&0&1&a^{q^4+q}&0\end{pmatrix}=\\
%=\N(a)^2-\N(a)+1.\qedhere
%\end{gather*}
%\end{proof}

\begin{proposition} \label{height4}
Assume $\hht(\Lambda\cap\Lambda^\sigma)=4$.
Then, up to projectivities, 
%\begin{equation}\label{e:canonicano}\mbox{$\Lb=L_f$ where }f=\alpha x^q+\beta x^{q^2},
%\end{equation}
%or
\begin{equation}\label{e:canonicasi}
\Lb=L_{\alpha,\beta}=\{\la(x-\alpha x^{q^2},x^q-\beta x^{q^2})\ra_\Fqq\colon x\in\Fqs\}
\end{equation}
for some $\alpha,\beta\in\Fqq$ satisfying $\alpha^q\neq\beta^{q+1}$.
\end{proposition}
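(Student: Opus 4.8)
The plan is to use the hypothesis $\hht(\Lambda\cap\Lambda^\sigma)=4$ to put $\Lb$ in standard position by one change of projective frame, and then to read off $\alpha,\beta$ directly from the vertex $\Lambda$. Recall that two projections $p_\Lambda(\Sigma)$ and $p_{\Lambda'}(\Sigma)$ give projectively equivalent linear sets whenever $\Lambda'=\Lambda^g$ for a collineation $g$ stabilizing $\Sigma$, so the freedom available to us is precisely the stabilizer of $\Sigma$.

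First I would write $P=\Lambda\cap\Lambda^\sigma=\la\vv\ra$ and let $\hat\sigma$ be the semilinear map of $\F_{q^5}^5$ inducing $\sigma$; thus $\hat\sigma^5=\mathrm{id}$ and $\Sigma=\mathrm{Fix}(\sigma)$. Since $\hht P=4$, the five vectors $\vv,\vv^{\hat\sigma},\dots,\vv^{\hat\sigma^4}$ form an $\F_{q^5}$-basis; relabel it as $(\mathbf b_i)$ with $\mathbf b_4=\vv$. The relation $\hat\sigma^5=\mathrm{id}$ gives $\hat\sigma(\mathbf b_i)=\mathbf b_{i+1\bmod 5}$, so in this frame $\hat\sigma$ still has its shift-Frobenius form; consequently $\Sigma=\mathrm{Fix}(\sigma)$ is again the standard subgeometry, and the frame change is a projectivity stabilizing $\Sigma$ that sends $P$ to $O_4$ and $P^\sigma,\dots,P^{\sigma^4}$ to $O_0,O_1,O_2,O_3$. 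This is the step that genuinely uses the height hypothesis: without height four the orbit vectors would be dependent and no such frame would exist.

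Next, from $P\in\Lambda^\sigma$ one gets $P^{\sigma^4}\in\Lambda$, so $\Lambda$ contains $O_4$ and $O_3$ and therefore $\Lambda=\overline{O_3,O_4,\la(w_0,w_1,w_2,0,0)\ra}$ for a representative chosen modulo $\overline{O_3,O_4}$. In the main case $w_2\neq0$ I rescale to $(\alpha,\beta,1,0,0)$, so that $\Lambda=\{X_0=\alpha X_2,\ X_1=\beta X_2\}$; projecting $\Sigma$ onto the complementary line $\overline{O_0,O_1}$ through the two forms $X_0-\alpha X_2$ and $X_1-\beta X_2$ cutting out $\Lambda$ sends $P_u$ to $\la(u-\alpha u^{q^2},\,u^q-\beta u^{q^2})\ra$, which is exactly $L_{\alpha,\beta}$. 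Evaluating $\Lambda\cap\Lambda^\sigma$ in these coordinates, the intersection collapses to the single point $O_4$ exactly when $\alpha^q\neq\beta^{q+1}$ and becomes a line when $\alpha^q=\beta^{q+1}$; since $\Lambda\cap\Lambda^\sigma$ is assumed to be a point, this yields the required inequality $\alpha^q\neq\beta^{q+1}$.

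The main obstacle is the degenerate case $w_2=0$, where $\Lambda\subseteq\{X_2=0\}$ meets the chosen target line and the normalization above is unavailable. Here I would instead project onto another complementary line and obtain $L_f$ with $f$ a binomial in $x^{q^3},x^{q^4}$; the exclusion of the pseudoregulus type forces both coefficients to be nonzero, and the adjoint equivalence already invoked in the opening Proposition of this section rewrites $f$ as a binomial in $x^q,x^{q^2}$, i.e. as $L_{0,\beta}$ with $\beta\neq0$ --- still of the claimed shape $L_{\alpha,\beta}$ with $\alpha=0$ and $\alpha^q\neq\beta^{q+1}$. The two points needing the most care are thus the verification that the orbit-frame change preserves $\Sigma$ (which rests on $\Sigma=\mathrm{Fix}(\sigma)$ and $\hat\sigma^5=\mathrm{id}$) and the handling of this degenerate case; the intervening coordinate computations are routine.
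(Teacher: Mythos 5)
Your proposal is correct, and its skeleton is the same as the paper's proof: normalize so that $\Lambda\cap\Lambda^\sigma=O_4$, deduce $O_3\in\Lambda$ and write $\Lambda=\overline{O_3,O_4,\la(w_0,w_1,w_2,0,0)\ra_\Fqq}$, split on whether the third coordinate (the paper's $c$) vanishes, handle the degenerate case via the adjoint lemma, and read $\alpha^q\neq\beta^{q+1}$ off the condition that $\Lambda\cap\Lambda^\sigma$ be a point. You deviate in two local but genuinely useful ways. First, where the paper invokes \cite[Proposition 3.1]{BoPo2005} for the transitivity of $\PGL(5,q^5)_{\{\Sigma\}}$ on points of height four, you prove exactly the instance needed: the $\hat\sigma$-orbit of a vector spanning $P$ is a basis precisely because $\hht P=4$, and in that basis $\hat\sigma$ regains its shift-Frobenius form, so the frame change stabilizes $\Sigma=\mathrm{Fix}(\sigma)$ and carries $P$ to $O_4$. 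This makes the step self-contained; the one point to polish is that an arbitrary semilinear lift of $\sigma$ only satisfies $\hat\sigma^5=c\cdot\mathrm{id}$ for some scalar $c$, but the standard lift used in the paper satisfies $\hat\sigma^5=\mathrm{id}$ on the nose (or one rescales $\hat\sigma$ using surjectivity of the norm). Second, in the main case you compute the projection in primal coordinates, evaluating at $P_u$ the two forms $X_0-\alpha X_2$ and $X_1-\beta X_2$ cutting out $\Lambda$, which yields $L_{\alpha,\beta}$ at once; the paper instead computes the hyperplane (dual) coordinates of $\overline{\Lambda\cup P_u}$ and then needs the extra remark, via $f_3=-af_1-bf_2$ and scatteredness, that the pair $(f_1,f_2)$ can serve as homogeneous coordinates. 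Your route avoids that detour. The rest matches: your intersection computation (a point, necessarily $O_4$, iff $\alpha^q\neq\beta^{q+1}$, and a line otherwise) is the paper's condition $b^{q+1}-ca^q\neq 0$ with $c=1$, and your observation that the degenerate case gives $\alpha=0$, $\beta\neq0$, hence still $\alpha^q\neq\beta^{q+1}$, correctly covers both branches of the statement.
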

\begin{proof}
Since the setwise stabilizer $\PGL(5,q^5)_{\{\Sigma\}}$ acts transitively on the points of $\PG(4,q^5)$ of height four,
  \cite[Proposition 3.1]{BoPo2005}, it may be assumed that $O_4=\Lambda\cap\Lambda^\sigma$.
  This in turn implies $O_3\in\Lambda$, and
\begin{equation}\label{e:zero}
  \Lambda=\overline{\la(a,b,c,0,0)\ra_{\Fqq},O_3,O_4}
\end{equation}
  for some $a,b,c\in\Fqq$, not all zero.
The hyperplane coordinates of the span of $\Lambda$ and $P_u$ are
\[ [cu^q-bu^{q^2}:-cu+au^{q^2}:bu-au^q:0:0]. \]
So, for $c=0$ the linear set $\Lb$ is projectively equivalent to
\[
\{\la(x^{q^2},bx-ax^{q})\ra_\Fqq\colon x\in\Fqs\}=\{\la(x,bx^{q^3}-ax^{q^4})\ra_\Fqq\colon x\in\Fqs\},
\]
and by  \cite[Lemma 2.6]{BaGiMaPo2018}, \cite[Lemma 3.1]{CsMaPo2018} this can be expressed in the form 
$L_f$ where $f=d x^q+e x^{q^2}$; more precisely, $d=-a^q$ and $e=b^{q^2}$. 
Since $L$ is not of pseudoregulus type, $de\neq0$. In this case $\Lb$ is projectively equivalent
to $L_{0,-ed^{-1}}$.
If $c\neq0$, then $c=1$ may be assumed.
Let $f_1(u)=u^q-bu^{q^2}$, $f_2(u)=-u+au^{q^2}$, $f_3(u)=bu-au^q$.
Clearly $f_3=-af_1-bf_2$.
So, taking into account that $\Lb$ is scattered, the pairs $(f_1(u),f_2(u))$ and $(f_1(v),f_2(v))$
are $\Fq$-linearly dependent if and only if $u$ and $v$ are.
Therefore $f_1(u)$ and $f_2(u)$ can be
chosen as homogeneous coordinates of the points of $\Lb$.

If the intersection $\Lambda\cap\Lambda^\sigma$ is not a point then $\Lb$ is a linear set of pseudoregulus
type, a contradiction.
Furthermore, direct computations show that $\Lambda\cap\Lambda^\sigma$ is a point if,
and only if, $b^{q+1}-ca^q\neq0$.
This implies  $\alpha^q\neq\beta^{q+1}$.
\end{proof}
\begin{proposition}\label{equivPSE}
\begin{enumerate}[(i)]
\item The linear set $L_{\alpha,\beta}$ has rank less than five if and only if
\begin{equation}\label{eqalphabeta}
\alpha^q=\beta^{q+1}\quad\mbox{ and }\quad \N(\alpha)=\N(\beta)=1.
\end{equation}
\item If $\alpha^q\neq\beta^{q+1}$, then $L_{\alpha,\beta}$ is not of pseudoregulus type.
\item If $\alpha^q=\beta^{q+1}$ and $(\N(\alpha),\N(\beta))\neq(1,1)$, then $L_{\alpha,\beta}$ is 
of pseudoregulus type.
\end{enumerate}
\end{proposition}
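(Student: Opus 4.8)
The plan is to prove (i) by a direct kernel computation and then to read off (ii) and (iii) from the projection model of $L_{\alpha,\beta}$, using part (i) to control the rank.

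For (i) I would use that $L_{\alpha,\beta}=L_U$ with $U$ the image of the $\Fq$-linear map $\Psi\colon x\mapsto(x-\alpha x^{q^2},\,x^q-\beta x^{q^2})$, so that its rank is $5-\dim_\Fq\ker\Psi$ and drops below five exactly when $\ker\Psi\neq\{0\}$. A nonzero $x_0\in\ker\Psi$ satisfies $x_0=\alpha x_0^{q^2}$ and $x_0^q=\beta x_0^{q^2}$, hence $\alpha=x_0^{1-q^2}$ and $\beta=x_0^{q-q^2}$; taking norms gives $\N(\alpha)=\N(\beta)=1$, and $\alpha^q=x_0^{q-q^3}=\beta^{q+1}$, which is the necessity of (\ref{eqalphabeta}). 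Conversely, assuming (\ref{eqalphabeta}), Hilbert's Theorem~90 for the cyclic extension $\Fqq/\Fq$ — whose Galois group is generated by $x\mapsto x^{q^2}$, as $\gcd(2,5)=1$ — produces from $\N(\alpha)=1$ a nonzero $x_0$ with $x_0=\alpha x_0^{q^2}$. Putting $\beta_0=x_0^{q-q^2}$ one checks $\beta_0^{q+1}=\alpha^q=\beta^{q+1}$ and $\N(\beta_0)=1$, so $\zeta=\beta/\beta_0$ satisfies $\zeta^{q+1}=1$ and $\N(\zeta)=1$.

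The key (short) obstacle in (i) is to promote $\beta=\zeta\beta_0$ to the equality $\beta=\beta_0$: from $\zeta^{q+1}=1$ one gets $\zeta^q=\zeta^{-1}$, hence $\zeta^{q^2}=\zeta$, $\zeta^{q^3}=\zeta^{-1}$, $\zeta^{q^4}=\zeta$, and therefore $\N(\zeta)=\zeta\cdot\zeta^{-1}\cdot\zeta\cdot\zeta^{-1}\cdot\zeta=\zeta$. Thus $\N(\zeta)=1$ forces $\zeta=1$, i.e. $x_0^q=\beta x_0^{q^2}$ and $x_0\in\ker\Psi$. This is exactly where the hypothesis $\N(\beta)=1$ is used: it removes the $(q+1)$-th-root-of-unity ambiguity that the single relation $\alpha^q=\beta^{q+1}$ leaves open. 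For (ii) and (iii) I would then pass to the projection picture. The two functionals $X_0-\alpha X_2$ and $X_1-\beta X_2$ vanish on the plane $\Lambda=\overline{\langle(\alpha,\beta,1,0,0)\rangle_\Fqq,O_3,O_4}$ and realise $L_{\alpha,\beta}=p_\Lambda(\Sigma)$; moreover $\Lambda\cap\Sigma=\emptyset$ precisely when $\ker\Psi=\{0\}$, that is, by (i), outside case (\ref{eqalphabeta}). A routine computation with the explicit $\sigma$ and $\sigma^2$ then shows that $\Lambda\cap\Lambda^{\sigma^2}$ is a single point for every $\alpha,\beta$, whereas $\Lambda\cap\Lambda^{\sigma}$ is a point when $\alpha^q\neq\beta^{q+1}$ and a line when $\alpha^q=\beta^{q+1}$.

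With these facts in hand, (ii) is immediate by contraposition: if $\alpha^q\neq\beta^{q+1}$ then both $\Lambda\cap\Lambda^\sigma$ and $\Lambda\cap\Lambda^{\sigma^2}$ are points, so were $L_{\alpha,\beta}$ of pseudoregulus type — hence maximum scattered — the criterion recalled in the Introduction from \cite[Theorem~2.3]{CsZa2016} would force one of them not to be a point. For (iii), if $\alpha^q=\beta^{q+1}$ and $(\N(\alpha),\N(\beta))\neq(1,1)$, then part (i) gives rank five and $\Lambda\cap\Sigma=\emptyset$, while $\Lambda\cap\Lambda^\sigma$ is a line; the same criterion then identifies $L_{\alpha,\beta}$ as a linear set of pseudoregulus type. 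I expect the most delicate point overall to be ensuring that the maximum scattered hypothesis of the cited criterion is legitimately available in (iii); this is supplied by the rank-five conclusion of (i) together with the degeneracy $\dim(\Lambda\cap\Lambda^\sigma)=1$.
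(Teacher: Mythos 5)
Your proposal is correct and follows essentially the same route as the paper: part (i) by characterising when the two defining polynomials have a common nontrivial root, and parts (ii)--(iii) by computing $\dim(\Lambda\cap\Lambda^{\sigma})$ and $\dim(\Lambda\cap\Lambda^{\sigma^2})$ for the vertex plane $\Lambda=\overline{\la(\alpha,\beta,1,0,0)\ra_\Fqq,O_3,O_4}$ and invoking the pseudoregulus criterion derived from \cite[Theorem 2.3]{CsZa2016}. The only differences are ones of detail, in your favour: you make explicit, via Hilbert~90 for the generator $x\mapsto x^{q^2}$ and the roots-of-unity argument for $\zeta$, the sufficiency direction of (i) that the paper compresses into the single phrase ``This is equivalent to (\ref{eqalphabeta})'', and you address explicitly how the scatteredness hypothesis in the criterion is legitimately available, which the paper's proof also leaves implicit.
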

\begin{proof}
Note that $x-\alpha x^{q^2}$ has non-trivial zeros if and only if $\N(\alpha)=1$ and $x^q-\beta x^{q^2}$ has non-trivial zeros if and only if $\N(\beta)=1$.
Also, $L_{\alpha,\beta}$ is of rank less than $5$ if and only if there is a common non-trivial root of the defining polynomials, that is, $x \in \mathbb{F}_{q^5}^*$ such that $x^{(q+1)(1-q)}=\alpha$ and $x^{(1-q)q}=\beta$. This is equivalent to (\ref{eqalphabeta}).

Since for $\alpha^q\neq\beta^{q+1}$ both $\Lambda\cap\Lambda^\sigma$ and 
$\Lambda\cap\Lambda^{\sigma^2}$ are points, no linear set of type $L_{\alpha,\beta}$
satisfying such inequality is of pseudoregulus type, whereas, as mentioned in proof of
Prop.\ \ref{height4}, if $\alpha^q=\beta^{q+1}$, then $\Lambda\cap\Lambda^\sigma$ is a line,
so $L_{\alpha,\beta}$ is of pseudoregulus type.
\end{proof}

\noindent\textbf{Remarks.}\\
1) By Proposition \ref{prop2} and the subsequent remark, for $\beta\neq0$ no $L_{0,\beta}$ 
is scattered for 
$q \geq 223$ or $q\le17$.\\
2) For $\beta=0$ and $\N(\alpha)\neq-1$, (\ref{e:canonicasi}) defines a linear set of Lunardon-Polverino type.
As a matter of fact take $y=x^q$, then up to projective equivalence $L_{\alpha,\beta}$ is
\[ \{\la(y,-\alpha y+y^{q^4})\ra_\Fqq\colon y\in\Fqs\} \]
which is maximum scattered if and only if $\N(-\alpha)\neq1$ \cite{LuPo2001}.\\
3) Similarly to Proposition \ref{height4}, if $\hht(\Lambda\cap\Lambda^{\sigma^2})=4$
and $\Lb$ is not of Sheekey type,
then $\Lb$ is projectively equivalent to
\[ \{\la(x-\alpha x^{q^3},x^q-\beta x^{q^3})\ra_\Fqq\colon x\in\Fqs\}
\]
for some $\alpha,\beta\in\Fqq$ not both zero.

\section{On some binomial linear sets}

\begin{lemma} \label{lem1} 
Let $q \geq 223$. Then any $b \in \mathbb{F}_{q^5}^*$ can be written as
\begin{equation} \label{eq1}
b=\frac{uv^q-u^qv}{u^{q^2}v-uv^{q^2}}
\end{equation}
for some $u,v \in \mathbb{F}_{q^5}^*$ such that $\dim \langle u,v \rangle_{\mathbb{F}_q}=2$.
\end{lemma}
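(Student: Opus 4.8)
The plan is to reformulate the statement as a question about $\F_q$-rational points on a variety. First I would clear denominators in (\ref{eq1}): the claim that $b$ is representable is equivalent to the existence of $u,v\in\Fqs$ with $u,v$ linearly independent over $\Fq$ satisfying
\[
b(u^{q^2}v-uv^{q^2})=uv^q-u^qv.
\]
Writing this as $F(u,v)=b(u^{q^2}v-uv^{q^2})-(uv^q-u^qv)=0$, I would then pass from the two variables $u,v\in\Fqq$ to $\F_q$-coordinates. Fixing a basis $\cB=\{e_0,\dots,e_4\}$ of $\Fqq$ over $\Fq$ and writing $u=\sum u_ie_i$, $v=\sum v_ie_i$, each of the three products $u^{q^2}v$, $uv^{q^2}$, $uv^q$, $u^qv$ is an $\F_q$-bilinear-after-Frobenius expression in the ten coordinates $(u_0,\dots,u_4,v_0,\dots,v_4)$; equating the five $\cB$-coordinates of $F(u,v)$ to zero gives five equations. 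However, the equation is homogeneous of degree two and invariant under the scaling $(u,v)\mapsto(\lambda u,\lambda v)$ with $\lambda\in\Fqs$, so I expect to be able to normalize, say $u=1$, and reduce to a hypersurface in $\mathbb A^5(\F_q)$ in the five coordinates of $v$, exactly as the introduction signals when it mentions "the degree 5 hypersurface (\ref{eq9})."

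Second, once the representability is encoded as solvability of a single polynomial equation of the shape $b\cdot(v-v^{q^2})=(v^q-1)$ (after the normalization $u=1$, so that (\ref{eq1}) reads $b=(v^q-v)/(v^{q^2}-v)$ up to the precise form of the numerator/denominator), I would expand in $\cB$-coordinates to obtain a hypersurface $\cH\subset\mathbb A^5(\F_q)$ whose degree I expect to be $5$, matching the statement in the introduction. The geometric content of "$\dim\langle u,v\rangle_{\F_q}=2$," i.e. $u,v$ independent, translates into $v\notin\Fq$ after normalization, and this is precisely the condition of avoiding a special hyperplane (the locus where all the Frobenius-twisted coordinates collapse). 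So the goal becomes: show $\cH$ has an $\F_q$-rational point off that hyperplane.

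Third, the main tool would be the point-counting bound for hypersurfaces due to Slavov, quoted as Prop.\ \ref{prelvar} in the excerpt. The strategy is to bound below the number of $\F_q$-points of the degree-$5$ hypersurface $\cH$ in $\mathbb A^5(\F_q)$, and bound above the number of points lying on the excluded hyperplane, and check that for $q\ge 223$ the former strictly exceeds the latter, guaranteeing an admissible pair $(u,v)$. Slavov's bound typically gives that a hypersurface of degree $d$ in $\mathbb A^n$ that is not contained in a hyperplane has at least on the order of $q^{n-1}$ points, with an error term controlled by $d$; here $n=5$, $d=5$, so the dominant term is of size $q^4$ while the degenerate hyperplane locus contributes $O(q^3)$, and the arithmetic comparison forces $q\ge 223$.

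The hard part will be verifying the hypotheses of Slavov's theorem for this particular $\cH$, namely that $\cH$ genuinely has degree $5$, is not contained in any hyperplane, and satisfies whatever irreducibility/dimension condition the bound requires; in practice this means understanding the geometry of the Frobenius-twisted polynomial well enough to rule out degenerations, and then tracking the constants in the bound carefully enough to extract the explicit threshold $q\ge 223$. The companion remark in the excerpt about extending the result to $q\le 17$ by exhaustive computer search confirms that the intermediate range $18\le q\le 222$ is not covered by the asymptotic argument, so all the effort goes into making Slavov's inequality effective precisely for $q\ge 223$.
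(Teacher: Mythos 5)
Your reduction step contains a fatal error: the equation $b(u^{q^2}v-uv^{q^2})=uv^q-u^qv$ is \emph{not} invariant under the scaling $(u,v)\mapsto(\lambda u,\lambda v)$ for $\lambda\in\F_{q^5}^*$. The left-hand side scales by $\lambda^{q^2+1}$ and the right-hand side by $\lambda^{q+1}$, so the equation is preserved only when $\lambda^{q(q-1)}=1$, i.e.\ only for $\lambda\in\F_q^*$. Consequently you cannot normalize $u=1$. In fact, setting $u=1$ makes the statement false: the equation becomes $b(v-v^{q^2})=v^q-v$, which is $\F_q$-linear in $v$ (each Frobenius power is an $\F_q$-linear map), so in coordinates it is a system of five \emph{linear} equations — not a degree-$5$ hypersurface; moreover the map $v\mapsto (v^q-v)/(v-v^{q^2})$ is invariant under $v\mapsto\lambda v+c$ with $\lambda\in\F_q^*$, $c\in\F_q$, so its image has at most $(q^5-q)/\bigl(q(q-1)\bigr)=q^3+q^2+q+1$ elements and misses almost all of $\F_{q^5}^*$. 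The second degree of freedom carried by $v$ is exactly what makes the lemma true, and it cannot be normalized away.

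The missing idea — the heart of the paper's proof — is how that freedom is used. Writing $x=u/v$ and $y=1/v^q$, the equation becomes $y^{q-1}=-b(x^{q^2}-x)/(x^q-x)$; such a $y\in\F_{q^5}^*$ exists if and only if the right-hand side is a $(q-1)$-th power, i.e.\ if and only if its $m$-th power equals $1$, where $m=(q^5-1)/(q-1)$. This reduces the lemma to: for every $a\in\F_q^*$ there is an $x\in\F_{q^5}\setminus\F_q$ with $(x^{q^2}-x)^m=a(x^q-x)^m$, where $b$ now enters only through $a$, essentially its norm. The degree-$5$ hypersurface (\ref{eq9}) arises from \emph{this} equation, not from a naive coordinate expansion of the original one: in the Moore coordinates $(A,B,C,D,E)=(x,x^q,x^{q^2},x^{q^3},x^{q^4})$ one has $(x^{q^2}-x)^m=\prod_{i=0}^4(x^{q^2}-x)^{q^i}=(C-A)(D-B)(E-C)(A-D)(B-E)$, a product of five linear forms, and similarly for $(x^q-x)^m$. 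Only after this reduction can Lemma \ref{prelvar} be invoked, and even then its hypothesis is absolute irreducibility of the hypersurface, not merely that it is not contained in a hyperplane as you state; establishing that irreducibility (ruling out a linear component and a quadric-times-cubic splitting) is the bulk of the paper's proof, which you defer in one sentence. Without it the bound gives nothing, since a degree-$5$ hypersurface could a priori split into components none of which is defined over $\F_q$.
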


We will use the following preliminary result.

\begin{lemma}{\rm{\cite[Corollary 7]{Sl2017}}} \label{prelvar}
Let $G \in \mathbb{F}_q [x_1,\ldots,x_n]$ be an absolutely irreducible polynomial of degree $d$, and let $H \in \mathbb{F}_q [x_1,\ldots,x_n]$ be a polynomial of degree $e$, not divisible by $G$. Then there exists a nonsingular zero of $G$ over $\mathbb{F}_q$, which is not a zero of $H$, provided that
\begin{equation} \label{eqs}
q > \frac{1}{4}\bigg((d-1)((d-2)+\sqrt{(d-1)^2(d-2)^2+4(d^2+de+10)}\bigg)^2.
\end{equation}
\end{lemma}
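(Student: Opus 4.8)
The plan is to remove the homogeneity in \eqref{eq1} by passing to the single variable $t=u/v$, to convert \eqref{eq1} into a norm equation, and to read that equation as a degree $5$ hypersurface in $\mathbb A^5(\Fq)$ to which Slavov's estimate applies. Since $u,v$ are required to be $\Fq$-independent, $v\neq0$; writing $t=u/v$ and using $uv^q-u^qv=v^{q+1}(t-t^q)$ and $u^{q^2}v-uv^{q^2}=v^{q^2+1}(t^{q^2}-t)$, relation \eqref{eq1} becomes
\[
b=v^{q-q^2}\,\frac{t-t^q}{t^{q^2}-t}.
\]
Applying the norm $\N$ and noting $\N(v^{q-q^2})=\N(v)^{q-q^2}=1$ (because $\N(v)\in\Fq^*$ and $q-q^2\equiv0\pmod{q-1}$) yields
\begin{equation}\label{starnorm}
\N(t-t^q)=\N(b)\,\N(t^{q^2}-t).
\end{equation}
Conversely, if $t\in\Fqq\setminus\Fq$ satisfies \eqref{starnorm}, then $(t-t^q)/\big(b(t^{q^2}-t)\big)$ has norm $1$; since $v\mapsto v^{q-q^2}$ maps $\Fqs$ onto the norm-one subgroup of $\Fqs$, there is $v$ with $v^{q-q^2}=(t-t^q)/\big(b(t^{q^2}-t)\big)$, and then $u=tv$ and $v$ are $\Fq$-independent (as $t\notin\Fq$) and realise $b$ as in \eqref{eq1}. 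Thus the Lemma is equivalent to the existence of a solution $t\in\Fqq\setminus\Fq$ of \eqref{starnorm}.

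Fixing an $\Fq$-basis $\omega_0=1,\omega_1,\dots,\omega_4$ of $\Fqq$ and writing $t=\sum_i t_i\omega_i$, both sides of \eqref{starnorm} are homogeneous of degree $5$ in $t_0,\dots,t_4$, so \eqref{starnorm} defines a degree $5$ hypersurface $\cX=V(G)\subseteq\mathbb A^5(\Fq)$ with $G=\N(t-t^q)-\N(b)\,\N(t^{q^2}-t)$. Over $\overline{\Fq}$ the forms $x_j=t^{q^j}$ are linear coordinates (the Moore matrix $(\omega_i^{q^j})$ is nonsingular), and in them
\[
G=\prod_{j=0}^{4}(x_j-x_{j+1})-\N(b)\prod_{j=0}^{4}(x_{j+2}-x_j)\qquad(\text{indices mod }5).
\]
The forbidden locus $t\in\Fq$ is the diagonal line $x_0=\dots=x_4$, which lies on $\cX$ since both products vanish there; in the $t_i$-coordinates (with $\omega_0=1$) it is the $t_0$-axis, so I may take the special hyperplane $H$ to be any $\Fq$-rational hyperplane containing it, e.g.\ $H=\{t_4=0\}$, giving $e=1$. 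Any $\Fq$-point of $\cX$ off $H$ then has $t\notin\Fq$.

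The heart of the argument, and the main obstacle, is the absolute irreducibility of $G$ for every $\N(b)\neq0$. I would establish it through normality: a hypersurface is Cohen--Macaulay, hence satisfies Serre's condition $S_2$, so it is normal as soon as its singular locus has codimension at least $2$; and a connected projective hypersurface that is normal is irreducible. It therefore suffices to show that $\operatorname{Sing}\cX=V\big(G,\partial G/\partial x_0,\dots,\partial G/\partial x_4\big)$ has dimension at most $1$. The cyclic structure of $G$ makes the partials tractable (logarithmic derivatives of the two products), and the candidate singular strata---the diagonal, together with the loci where several adjacent or distance-two differences vanish simultaneously---are all expected to be of dimension $\le1$. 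The genuine work is to confirm that no higher-dimensional component arises and that this holds uniformly in $\N(b)\neq0$, treating separately, if necessary, the finitely many values of $\N(b)$ for which the two families of linear forms admit an unexpected coincidence.

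Granting the irreducibility of $G$, the conclusion follows at once. Being absolutely irreducible of degree $5$, $G$ has no linear factor, so the degree-one form defining $H$ does not divide it, and Slavov's point-count for absolutely irreducible hypersurfaces (the estimate underlying Lemma \ref{prelvar}, with $d=5$ and $e=1$) guarantees a nonsingular $\Fq$-point of $\cX$ lying off $H$ once $q\ge223$. Such a point is a solution $t\notin\Fq$ of \eqref{starnorm}, which by the first paragraph produces the required $u,v\in\Fqs$ with $\dim\la u,v\ra_{\Fq}=2$. The remaining small values $q\le17$ are disposed of by an exhaustive computer search.
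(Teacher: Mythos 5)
Your proposal does not prove the statement it was meant to prove. The statement here is Slavov's result (Lemma \ref{prelvar}, quoted in the paper from \cite[Corollary 7]{Sl2017}): for an \emph{arbitrary} absolutely irreducible $G\in\F_q[x_1,\dots,x_n]$ of degree $d$ and an \emph{arbitrary} $H$ of degree $e$ not divisible by $G$, there is a nonsingular $\F_q$-zero of $G$ avoiding $H$ as soon as $q$ exceeds the explicit bound \eqref{eqs}. What you have written is instead a proof of Lemma \ref{lem1} (the representability of every $b\in\F_{q^5}^*$ as $(uv^q-u^qv)/(u^{q^2}v-uv^{q^2})$), and its final step explicitly invokes ``Slavov's point-count for absolutely irreducible hypersurfaces (the estimate underlying Lemma \ref{prelvar})''. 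Relative to the assigned statement this is circular: you assume precisely the result you were asked to establish. Nothing in your text engages with the actual content of Lemma \ref{prelvar} --- the general $G$ and $H$, the nonsingularity of the point produced, or the origin of the bound \eqref{eqs}. A proof would need entirely different machinery: Slavov's argument proceeds by slicing with a random plane to reduce to an absolutely irreducible plane curve, applying explicit Lang--Weil/Aubry--Perret type bounds there, and separately bounding the points lying on $G=H=0$ and on the singular locus of $G$; none of these ingredients appears in your proposal. (Note that the paper itself gives no proof either: it imports the lemma as a black box, so the expected content was a reconstruction of Slavov's argument.)

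For what it is worth, read as a proof of Lemma \ref{lem1}, your argument follows essentially the same route as the paper's: pass to $t=u/v$, rewrite \eqref{eq1} as a norm equation, use the Moore/Frobenius change of coordinates to exhibit a degree-$5$ hypersurface, and apply Lemma \ref{prelvar} with $d=5$, $e=1$ against a hyperplane containing the locus $t\in\F_q$. But even in that reading the decisive step is left open: you only sketch the absolute irreducibility of the hypersurface via Serre's criterion and normality, and you explicitly defer the verification that the singular locus has small enough dimension, uniformly in $\N(b)\neq0$. The paper carries out exactly this step in full, by explicit coefficient comparisons ruling out a linear component and a splitting into an irreducible quadric times an irreducible cubic; without that (or your normality computation actually done), the existence claim does not follow.
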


Now we can proceed with the proof of Lemma \ref{lem1}.

\begin{proof}
First note that the right hand side of (\ref{eq1}) only makes sense when $\dim \langle u,v \rangle_{\mathbb{F}_q}=2$.

Let $b$ be an arbitrary element of $\mathbb{F}_{q^5}^*$. Clearly, $$(u^{q^2}v-uv^{q^2})b=uv^q-u^qv$$
holds for $u,v \in \mathbb{F}_{q^5}^*$ if and only if
$$v^{q^2+1}\bigg( \bigg( \frac{u}{v}\bigg)^{q^2} - \bigg( \frac{u}{v}\bigg) \bigg)b=-v^{q+1} \bigg( \bigg( \frac{u}{v}\bigg)^q-\bigg( \frac{u}{v}\bigg)\bigg),$$
which is equivalent to

\begin{equation} \label{eq2}
\bigg( \bigg( \frac{u}{v}\bigg)^{q^2} - \bigg( \frac{u}{v}\bigg) \bigg)b=-\frac{1}{v^{q^2-q}} \bigg( \bigg( \frac{u}{v}\bigg)^q-\bigg( \frac{u}{v}\bigg)\bigg),
\end{equation}
since $v \ne 0$. Let $x:=u/v$ and $y=1/v^q$. Then   \eqref{eq2} reads,

\begin{equation} \label{eq3}
y^{q-1}=-b \bigg( \frac{x^{q^2}-x}{x^q-x}\bigg).
\end{equation}

Note that if we can find a couple $(x,y)$ where $x \in \mathbb{F}_{q^5} \setminus \mathbb{F}_q$ and $y \in \mathbb{F}_{q^5}^*$ such that   \eqref{eq3} is satisfied, then we can find a couple $(u,v) \in \mathbb{F}_{q^5}^* \times \mathbb{F}_{q^5}^*$ satisfying   \eqref{eq2} simply defining $v=\nu$, where $\nu^q=1/y$ and $u=vx$. 

Given $x \in \mathbb{F}_{q^5} \setminus \mathbb{F}_q$, there exists $y \in \mathbb{F}_{q^5}^*$ such that   \eqref{eq3} is satisfied if and only if 

\begin{equation} \label{eq4}
\bigg(-b \bigg( \frac{x^{q^2}-x}{x^q-x}\bigg)\bigg)^m=-b^m \bigg( \frac{x^{q^2}-x}{x^q-x}\bigg)^m=1,
\end{equation}
where $m=(q^5-1)/(q-1)$. In fact if $y \in \mathbb{F}_{q^5}^*$ exists then it is sufficient to use that $(y^{q-1})^m=y^{q^5-1}=1$ to note that   \eqref{eq4} is satisfied. Conversely, if   \eqref{eq4} is satisfied, then $-b(x^{q^2}-x)/(x^q-x)$ is a $(q-1)$-th power in $\mathbb{F}_{q^5}$ and hence it is sufficient to define $y$ to be an arbitrary $(q-1)$-th root of $-b(x^{q^2}-x)/(x^q-x)$.

Hence our aim is to show that for any $a\in \mathbb{F}_{q}^*$, there exists $x \in \mathbb{F}_{q^5} \setminus \mathbb{F}_q$ such that

\begin{equation} \label{eq5}
(x^{q^2}-x)^m=a(x^q-x)^m,
\end{equation}
so that defining $a:=-b^m$ the claim will follow.

A geometrical interpretation of \eqref{eq5} as the set of $\mathbb{F}_q$-rational points of an algebraic variety in $\mathbb{A}^5(\mathbb{F}_q)$ can be given as follows. 

From \cite[Theorem 2.35]{LiNi1997} we know that $\mathbb{F}_{q^5}$ admits a normal basis over $\mathbb{F}_q$, that is a basis of type $\{\gamma,\gamma^q,\gamma^{q^2},\gamma^{q^3},\gamma^{q^4}\}$ for some $\gamma \in \mathbb{F}_{q^5} \setminus \mathbb{F}_q$. So every solution $x$ of \eqref{eq5} can be written as $x=\sum_{i=0}^{4} x_i \gamma^{q^i}$ where $x_i \in \mathbb{F}_q$ for every $i=0,\ldots 4$. By applying the identification $\mathbb{F}_{q^5} \cong \mathbb{F}_q^5$ the $q$ elements of $\mathbb{F}_q$ in $\mathbb{F}_{q^5}$ can be identified with the elements of type $x=\sum_{i=0}^{4} \xi \gamma^{q^i}$ where $\xi \in \mathbb{F}_q$ as 
$\Tr(\gamma)=\gamma+\gamma^q+\gamma^{q^2}+\gamma^{q^3}+\gamma^{q^4} \in \mathbb{F}^*_q$; while \eqref{eq5} can be rewritten as a system of $5$ equations in $5$ variables of type

\begin{equation} \label{eq8}
\mathcal{V}: 
\begin{cases}
C_0(x_0,\ldots,x_4)=a',\\
C_1(x_0,\ldots,x_4)=a', \\
C_2(x_0,\ldots,x_4)=a', \\
C_3(x_0,\ldots,x_4)=a',\\
C_4(x_0,\ldots,x_4)=a'
\end{cases}
\end{equation}
where $a'=a(\Tr(\gamma))^{-1}$. 
Indeed the algebraic variety $\mathcal{V} \subseteq \mathbb{A}^4(\mathbb{F}_q)$ is obtained by forcing each coefficient $C_i(x_0,\ldots,x_4)$ of $\gamma^{q^i}$ in $(x^{q^2}-x)^m/(x^q-x)^m=((x^q-x)^{q-1}+1)^m$ to be equal to $a'$ for $i=0,\ldots,4$.

We apply the following change of variables in $\mathbb{F}_{q^5}$ (whose matrix is a so-called
Moore matrix and is nonsingular):

$$\begin{cases} A=x_0 \gamma+x_1 \gamma^q + x_2 \gamma^{q^2}+x_3 \gamma^{q^3} + x_4 \gamma^{q^4}, \\ B=x_4 \gamma+x_0 \gamma^q + x_1 \gamma^{q^2}+x_2 \gamma^{q^3} + x_3 \gamma^{q^4}, \\ C=x_3 \gamma+x_4 \gamma^q + x_0 \gamma^{q^2}+x_1 \gamma^{q^3} + x_2 \gamma^{q^4}, \\ D=x_2 \gamma+x_3 \gamma^q + x_4 \gamma^{q^2}+x_0 \gamma^{q^3} + x_1 \gamma^{q^4}, \\ E=x_1 \gamma+x_2 \gamma^q + x_3 \gamma^{q^2}+x_4 \gamma^{q^3} + x_0 \gamma^{q^4},\end{cases}$$
that is $(A,B,C,D,E)=(x,x^q,x^{q^2},x^{q^3},x^{q^4})$. 
In these new variables, recalling that $m=q^4+q^3+q^2+q+1$, \eqref{eq5} reads,

\begin{equation} \label{eq9}
\mathcal{H}: (C-A)(D-B)(E-C)(A-D)(B-E)-a(B-A)(C-B)(D-C)(E-D)(A-E)=0, 
\end{equation}
which is a hypersurface in $\mathbb{A}^5(\mathbb{F}_{q^5})$.
%Seen as an algebraic variety over $\mathbb{A}^5(\mathbb{F}_{q^5})$ it is clear that $\mathcal{V}$, obtained from \eqref{eq5} by applying the identification $\mathbb{F}_{q^5} \sim \mathbb{F}_{q}^5$, is an hypersurface of degree $5$. 
%However 
We showed that the change of variables implies that the algebraic variety $\mathcal{V} \subseteq \mathbb{A}^5(\mathbb{F}_q)$ is birationally isomorphic to the hypersurface $\mathcal{H}$ over $\mathbb{F}_{q^5}$.
Since the dimension of a variety is a birational invariant, also $\mathcal{V}$ is a hypersurface of degree $5$ in $\mathbb{A}^5(\mathbb{F}_{q})$, that is $C_i=C_j$ for $i,j=0,\ldots,4$.

Also, for the same reason we can show that $\mathcal{H}$ is absolutely irreducible
to prove the absolute irreducibility of $\mathcal{V}$.

To ensure the existence of at least one point of \eqref{eq8}, we will use the following strategy.

\begin{itemize}

\item We prove that $\mathcal{H}$ is absolutely irreducible, so that $\mathcal{V} \subseteq \mathbb{A}^5(\mathbb{F}_q)$ is an absolutely irreducible hypersurface of degree $5$.

\item  We apply Lemma \ref{prelvar} with respect to the hyperplane $H(x_0,\ldots,x_4)=x_0-x_1=0$ to ensure the existence of a point $P=(p_0,p_1,p_2,p_3,p_4) \in \mathcal{V}$ with $p_0 \ne p_1$. Recalling that the elements in $\mathbb{F}_q$ are identified with the vectors in $\mathbb{F}_q^5$ of type $(a,a,a,a,a)$ with $a \in \mathbb{F}_q$ this implies the existence of a solution $x \in \mathbb{F}_{q^5} \setminus \mathbb{F}_q$ of \eqref{eq5}. 
\end{itemize}

Since the degree of $\mathcal{H}$ is five either $\mathcal{H}$ is absolutely irreducible, or it has 
a linear component (hyperplane) or it splits in an absolutely irreducible cubic and an absolutely 
irreducible quadric. 
We divide the proof in two steps accordingly.

\begin{itemize}
\item \textbf{Step 1: $\mathcal{H}$ has no linear component.}
Let $t: a_1 A+b_1 B+c_1 C+ d_1 D+ e_1 E+ f_1=0$ be a linear component of $\mathcal{H}$.

If $a_1 \ne 0$ then $A=(b_1 B+c_1 C+ d_1 D+ e_1 E +f_1)/a_1$. Substituting in $\mathcal{H}$ and considering the evaluation at $(A,B,C,0,0)$ we get that $b_1=0$ and since $a_1 \ne 0$ also $c_1=f_1=0$. Considering then the evaluation at $(A,B,C,D,0)$ since $a \ne 0$ we get that $d_1=0$ yielding $B^2C^2Da_1^2+BC^2D^2a_1^2=0$, so that $a_1=0$; a contradiction.

Assume that $a_1=0$ but $b_1 \ne 0$. Then $B=(c_1C+d_1 D+e_1 E+f_1)/b_1$ and substituting in $\mathcal{H}$ and considering the valuation in $(0,B,0,D,E)$ we get $d_1=e_1=f_1=0$. Evaluating then in $(0,B,C,D,E)$ we get that $c_1=b_1=0$ which is a contradiction.

Assume that $a_1=b_1=0$ and $c_1 \ne 0$. Then $C=(d_1 D + e_1 E + f_1)/c_1$. Considering the evaluation of $\mathcal{H}$ is $(0,0,C,D,E)$ we get that $d_1=f_1=0$. Now the evaluation at $(0,B,C,D,E)$ gives $c_1=0$ and hence a contradiction.

Assume that $a_1=b_1=c_1=0$ but $d_1 \ne 0$. Then substituting $D=(e_1 E + f_1)/d_1$ in $\mathcal{H}$ gives $A^2 B^2 c d_1+A^2 B^2 E d_1^2+\ldots=0$, so that $d_1=0$; a contradiction.

Finally $a_1=b_1=c_1=d_1=0$ and $e_1 \ne 0$ otherwise $t$ would be a constant. From $\mathcal{H}$ we get that $A^2B^2Ce_1^2+\ldots+CD^2f_1^2=0$ so that $e_1=f_1=0$, which is not possible.

\item \textbf{Step 2: $\mathcal{H}$ does not split as the product of an absolutely irreducible cubic and an absolutely irreducible quadric.}
Assume by contradiction that the quadric
$$\mathcal{C}: a_2 A^2+b_2 B^2+c_2 C^2+d_2 E^2+ab_{1,1} AB+ac_{1,1} AC+ad_{1,1} AD$$
$$+ae_{1,1} AE+bc_{1,1}BC+bd_{1,1} BD+be_{1,1} BE+cd_{1,1} CD + c_1 e_1 CE+d_1 e_1 DE$$
$$+a_1 A+b_1 B+c_1 C+d_1D + e_1 E+f_0=0$$
is an absolutely irreducible component of $\mathcal{H}$. Evaluating the resultant of $\mathcal{H}$ and $\mathcal{C}$ with respect to $A$ in $(A,B,C,0,0)$ we get that $B^8C^2b_2^2+\ldots+B^4C^2f_0^2=0$ and hence $b_2=f_0=b_1=0$. Substituting the obtained values of the parameters we also get that $c_1(a_1+c_1)=0$ and thus either $c_1=0$ or $a_1=-c_1$. 

Assume first that $c_1=0$. Considering the valuation of the result in $(A,0,C,D,0)$ we get that $C^8D^2a^2c_2^2+\ldots+C^2D^8a^2d_2^2+\ldots+C^2D^6a^2d_1^2=0$ so that $c_2=d_2=d_1=0$.
Considering now the valuation at $(A,0,0,D,E)$ one has $D^2E^6 e_1^2+\ldots + D^2 E^8 e_2^2=0$, so that $e_1=e_2=0$. Considering the valuation at $(A,B,0,0,E)$ we get $b_1 e_1=0$ while from the valuation at $(A,0,D,D,E)$ we get $a_1=0$. Since this implies that $a_1=0=-c_1$, this condition can be assumed from the beginning.

Suppose hence that $a_1=-c_1$. 
From the valuations of the resultant in $(A,B,B,0,E)$, $(A,B,0,0,E)$, $(A,0,C,D,E)$ and $(A,0,C,D,C)$ we get that $e_1=e_2=0$, $be_{1,1}=0$, $c_2=0$ and $d_2=0$ respectively. Analyzing the structure of the valuation in $(A,B,C,D,0)$ we get that all the quadratic terms in $\mathcal{C}$ must be equal to zero and hence a contradiction.

This method can fail only if all the coefficients of terms involving $A$ in $\mathcal{C}$ are equal to zero. However, similar contradictions can be obtained assuming that all the coefficients of $A$ are equal to zero but at least one coefficient in the remaining variables is not equal to zero.
\end{itemize}

This shows that $\mathcal{H}$ (and hence $\mathcal{V}$) is absolutely irreducible. 

From Lemma \ref{prelvar} applied with respect to the hyperplane $H(x_0,\ldots,x_4)=x_0-x_1$ we get that if 
\[
q>\left\lfloor\frac{1}{4}\bigg((5-1)((5-2)+\sqrt{(5-1)^2(5-2)^2+4(25+5 \cdot 1+10)}\bigg)^2
\right\rfloor=216
\]
then $\mathcal{V}$ has at least an $\mathbb{F}_{q}$-rational point $P$ which does not correspond to a solution of \eqref{eq5} in $\mathbb{F}_{q}$. Since in our hypothesis $q \geq 223$ the claim follows.
\end{proof}

\

\begin{proposition} \label{prop2}
Let $q \geq 223$ and let $f(x)=x^q+bx^{q^2}$ for some $b \in \mathbb{F}_{q^5}^*$. Then $L_f=\{\langle (x,f(x))\rangle_{\mathbb{F}_{q^5}} \ : \ x \in \mathbb{F}_{q^5}^*\}$ is not a maximum scattered linear set of $\PG(1,q^5)$.
\end{proposition}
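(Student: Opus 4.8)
The plan is to reduce the statement directly to Lemma \ref{lem1}. First I would record that the $\Fq$-subspace $U=\{(x,f(x))\colon x\in\Fqq\}$ has $\dim_\Fq U=5$, since $x\mapsto(x,f(x))$ is an injective $\Fq$-linear map; hence $L_f$ has rank five, and for a rank-five linear set of $\PG(1,q^5)$ being maximum scattered is equivalent to being scattered (a scattered linear set of rank $t=5$ in $\PG(1,q^5)$ is by definition maximum scattered). Thus it suffices to exhibit a point of $L_f$ of weight at least two.

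Next I would translate the weight condition into an algebraic identity. A point of $L_f$ has weight at least two exactly when there are $x,y\in\Fqs$ with $\dim\langle x,y\rangle_\Fq=2$ such that $(x,f(x))$ and $(y,f(y))$ span the same point of $\PG(1,q^5)$, i.e.\ when the $2\times2$ determinant $xf(y)-yf(x)$ vanishes. Substituting $f(x)=x^q+bx^{q^2}$ and simplifying gives
\[
xf(y)-yf(x)=(xy^q-x^qy)+b\,(xy^{q^2}-x^{q^2}y).
\]
Setting this equal to zero and solving for $b$ yields $b=(x^qy-xy^q)/(xy^{q^2}-x^{q^2}y)$, which, after writing $u=x$ and $v=y$, is precisely
\[
b=\frac{uv^q-u^qv}{u^{q^2}v-uv^{q^2}},
\]
the representation appearing in \eqref{eq1}.

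Finally I would invoke Lemma \ref{lem1}: for $q\ge223$ every $b\in\Fqs$ can be written in exactly this form with $\dim\langle u,v\rangle_\Fq=2$. Reading the resulting $u=x$, $v=y$ back into the determinant identity produces two $\Fq$-independent vectors $(x,f(x))$ and $(y,f(y))$ spanning the same point, i.e.\ a point of $L_f$ of weight at least two. Hence $L_f$ is not scattered, and a fortiori not maximum scattered.

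The proof carries essentially no new difficulty: the entire analytic content---that every nonzero element is represented by the right-hand side of \eqref{eq1}---has already been established in Lemma \ref{lem1} through the Slavov bound and the absolute irreducibility of $\mathcal{H}$. The only genuine step here is the bookkeeping that matches the non-scattered condition for $L_f$ with the representation \eqref{eq1}; the one place to verify carefully is the sign when solving $xf(y)-yf(x)=0$ for $b$ and the substitution $u=x$, $v=y$ that makes the two expressions literally coincide.
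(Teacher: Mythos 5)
Your proof is correct and takes essentially the same approach as the paper: both reduce the statement to Lemma \ref{lem1} by observing that $xf(y)-yf(x)=(xy^q-x^qy)+b\,(xy^{q^2}-x^{q^2}y)$, so that the $u,v$ furnished by the lemma give two $\Fq$-independent vectors $(u,f(u))$, $(v,f(v))$ spanning the same point, i.e.\ a point of weight two. The paper merely packages this as the statement that $h_m(x)=mx+x^q+bx^{q^2}$ has a two-dimensional kernel for the explicit choice $m=(u^qv^{q^2}-u^{q^2}v^q)/(u^{q^2}v-uv^{q^2})$, which is the same condition as your vanishing determinant.
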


\begin{proof}
It is enough to show that there exists $m \in \mathbb{F}_{q^5}$ such that $h_m(x):=mx+x^q+bx^{q^2}$ has $q^2$ roots in $\mathbb{F}_{q^5}$. From Lemma \ref{lem1}, there exist $u,v \in \mathbb{F}_{q^5}^*$ such that \eqref{eq1} is satisfied and $\dim \langle u,v \rangle_{\mathbb{F}_q}=2$. Put $m=(u^qv^{q^2}-u^{q^2}v^q)/(u^{q^2}v-uv^{q^2})$. Then by direct checking $h_m(u)=h_m(v)=0$. 
%This implies that the point $\langle (1,-m)\rangle_{\mathbb{F}_{q^5}}$ has weight $2$ in $L_f$.
\end{proof}

\begin{remark}
Prop.\ \ref{prop2} has been extended by an exhaustive computer search using GAP also to any 
$q\le17$.
\end{remark}  

\section{The linear sets $L_{\alpha,\beta}$}

Let $L_{\alpha,\beta}$ denote the linear set defined in (\ref{e:canonicasi}).
Motivated by Props.\ \ref{equivPSE} and \ref{prop2} and Rem.\ 2) at the end of Sect.\ 2,
we will always assume $\alpha^q\neq\beta^{q+1}$ and $\alpha\beta\neq0$.
Since the point $\la(0,1)\ra_{\mathbb{F}_{q^5}}$ has weight less or equal to one,
$L_{\alpha,\beta}$ is maximum scattered if and only if there is no $m \in \mathbb{F}_{q^5}$ such that 
\begin{equation} \label{eqh}
h_m(x):=m(x-\alpha x^{q^2})+(x^q-\beta x^{q^2})=mx+x^q-x^{q^2}(\beta +m \alpha)
\end{equation}
has $q^2$ roots in $\mathbb{F}_{q^5}$, that is, if and only if 
there is no $m \in \mathbb{F}_{q^5}$ such that $h_m(x)$ has a two-dimensional  kernel.

Using this fact, we prove the following characterization of maximum scattered $\mathbb{F}_q$-linear sets of type $L_{\alpha,\beta}$. 
%We will always assume that $(\alpha,\beta) \ne (0,0)$ since $L_{0,0}$ is a maximum scattered $\mathbb{F}_q$-linear set of pseudoregulus type. 
It follows as a direct application of \cite[Theorem 3.3 and Section 3.3]{CsMaPoZu2018}.

\begin{lemma} \label{char}
Let $\alpha,\beta \in \mathbb{F}_{q^5}$ with $(\alpha,\beta) \ne (0,0)$. Then $L_{\alpha,\beta}$ is maximum scattered if and only if there is no $\lambda \in \mathbb{F}_{q^5}$ such that
(\footnote{In order to simplify the notation, from now on we write $N(-)$ instead of $\N(-)$.})
\begin{equation} \label{char1}
\begin{cases}
N(\lambda)=-1, \\ 
\lambda^q \beta^{q^3+q+1}+\beta^{q^3}(1-\lambda \alpha)^{q+1}-\lambda^{q^2+q+1} (1-\lambda \alpha)^{q^3} \beta^{q+1}=0.
\end{cases}
\end{equation}
\end{lemma}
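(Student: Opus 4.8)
The plan is to reduce the scatteredness of $L_{\alpha,\beta}$ to a statement about the kernel of $h_m$, and then to apply the characterisation of linearised polynomials with maximum kernel from \cite[Theorem 3.3 and Section~3.3]{CsMaPoZu2018}. By the discussion preceding \eqref{eqh}, $L_{\alpha,\beta}$ is maximum scattered if and only if no $m\in\Fqq$ makes $h_m(x)=mx+x^q-(\beta+m\alpha)x^{q^2}$ have a two-dimensional kernel. First I would dispose of the values $m$ with $\beta+m\alpha=0$: for these $h_m=mx+x^q$ has $q$-degree one, hence kernel of dimension at most one, and so they are irrelevant. Thus I may assume $\beta+m\alpha\neq0$, in which case the kernel equation $h_m(x)=0$ can be rewritten as $x^{q^2}=c_0x+c_1x^q$ with $c_0=m/(\beta+m\alpha)$ and $c_1=1/(\beta+m\alpha)$.

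Next I would invoke \cite[Theorem 3.3 and Section~3.3]{CsMaPoZu2018}. Introducing the companion matrices $A_i=\bigl(\begin{smallmatrix}0&1\\ c_0^{q^i}&c_1^{q^i}\end{smallmatrix}\bigr)$, a root $x\in\Fqq$ yields a vector $(x,x^q)$ fixed by $A:=A_4A_3A_2A_1A_0$, and one checks that $h_m$ has maximum (two-dimensional) kernel exactly when $A=I_2$. The cited results reduce this matrix identity to the norm condition $N(c_0)=-1$ (obtained from $\det A=(-1)^5N(c_0)$) together with the single scalar equation
\begin{equation*}
c_0\,c_0^{q}\,c_0^{q^2}\,c_1^{q^3}=c_0^{q}+c_1\,c_1^{q},
\end{equation*}
which is the $(1,1)$-entry of the relation $A_4A_3A_2=(A_1A_0)^{-1}$.

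Finally I would carry out the substitution $\lambda:=c_0=m/(\beta+m\alpha)$. For $\beta\neq0$ this is a bijection between the admissible $m$ and the $\lambda\in\Fqq$ with $1-\lambda\alpha\neq0$, with inverse $m=\lambda\beta/(1-\lambda\alpha)$ and hence $c_1=(1-\lambda\alpha)/\beta$. Under it $N(c_0)=-1$ becomes $N(\lambda)=-1$, while multiplying the displayed scalar equation through by $\beta^{q^3+q+1}$ turns it into exactly the second equation of \eqref{char1}. Negating, $L_{\alpha,\beta}$ is maximum scattered if and only if no such $\lambda$ exists. The degenerate case $\beta=0$ (which falls away as soon as one clears denominators, and which lies outside the standing hypothesis $\alpha\beta\neq0$ of Section~4) is handled separately: there $L_{\alpha,0}$ is of Lunardon--Polverino type, see Remark~2) at the end of Section~2.

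The step I expect to be the main obstacle is the reduction, imported from \cite{CsMaPoZu2018}, of the full identity $A=I_2$ to the norm condition plus the single equation above: a priori $A=I_2$ imposes four scalar constraints on the entries, and one must see that $N(\lambda)=-1$ together with the Frobenius conjugates of the one retained equation already forces the remaining three. For instance, the conjugate $c_0^{q^3}c_0^{q^4}c_0\,c_1^{q}=c_0^{q^4}+c_1^{q^3}c_1^{q^4}$, combined with $N(\lambda)=-1$, recovers the $(2,1)$-entry condition. Everything else is the bookkeeping of the $m\leftrightarrow\lambda$ correspondence and of the degenerate cases, carried out so that no value of $m$ producing a two-dimensional kernel is lost or spuriously introduced.
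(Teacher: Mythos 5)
Your proof is correct and takes essentially the same route as the paper's: reduce maximum scatteredness to the non-existence of $m$ with $\dim\ker h_m=2$, discard the degenerate values of $m$, normalise to $k_m(x)=a_0x+a_1x^q-x^{q^2}$, invoke \cite[Theorem 3.3 and Section 3.3]{CsMaPoZu2018}, and substitute $\lambda=m/(\beta+m\alpha)$, clearing denominators to obtain \eqref{char1}. The only real difference is that you spell out the companion-matrix mechanism behind the citation and flag that the $m\leftrightarrow\lambda$ correspondence requires $\beta\neq0$; the paper's proof silently uses $1/(\beta+m\alpha)=(1-\lambda\alpha)/\beta$, and in fact for $\beta=0$ the stated equivalence breaks down (the second equation of \eqref{char1} is identically zero, yet $L_{\alpha,0}$ can be maximum scattered of Lunardon--Polverino type), so your explicit side-treatment of that case is a point in your favour rather than a gap.
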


\begin{proof}
As recalled, $L_{\alpha,\beta}$ is maximum scattered if and only if there is no 
$m \in \mathbb{F}_{q^5}$ such that $h_m(x)$ has maximum kernel. 
We note that both $m \ne 0$ and $\beta+m \alpha \ne 0$ can be assumed. 
Indeed $h_0(x)=x^q-x^{q^2}\beta$ and such polynomial has clearly less than $q^2$ roots. 
The same holds if $\beta+m \alpha=0$ as in this case $h_m(x)=mx+x^q$.
So, $L_{\alpha,\beta}$ is maximum scattered if and only if there is no $m \in \mathbb{F}_{q^5}^*$ with $\beta+m \alpha \ne 0$ such that the polynomial $k_m(x)=a_0 x + a_1 x^q-x^{q^2}$ has maximum kernel, where
$$a_0=\frac{m}{\beta+m \alpha}, \quad {\rm and} \quad a_1=\frac{1}{\beta+m \alpha}.$$

From \cite[Theorem 3.3 and Section 3.3]{CsMaPoZu2018} $k_m(x)$ has maximum kernel if and only if
\begin{equation} \label{maxker}
\begin{cases}
N\bigg( \frac{m}{\beta+m \alpha}\bigg)=-1, \\
\bigg( \frac{m}{\beta+m\alpha}\bigg)^q+\bigg( \frac{1}{\beta+m\alpha}\bigg)^{q+1}=\bigg(\frac{m}{\beta+m\alpha}\bigg)^{q^2+q+1}\bigg(\frac{1}{\beta+m\alpha}\bigg)^{q^3}.
\end{cases}
\end{equation}

Write $\lambda=m/(\beta+m\alpha)$, so that $m=\lambda \beta/(1-\lambda \alpha)$ and $1/(\beta+m\alpha)=(1-\lambda \alpha)/\beta$. We get that $L_{\alpha,\beta}$ is maximum scattered if and only if there is no $\lambda \in \mathbb{F}_{q^5}$ such that

\begin{equation}\label{rewr}
\begin{cases}
N(\lambda)=-1, \\
\lambda^q+\frac{(1-\lambda \alpha)^{q+1}}{\beta^{q+1}}=\lambda^{q^2+q+1} \frac{(1-\lambda \alpha)^{q^3}}{\beta^{q^3}}.
\end{cases}
\end{equation}
and this is equivalent to \eqref{char1}.
\end{proof}

Our aim is to show with the help of Lemma \ref{char} that if 
$\alpha^q/\beta^{q+1} \in \mathbb{F}_{q^5} \setminus \mathbb{F}_q$, $\beta\neq0$, 
then $L_{\alpha,\beta}$ is not maximum scattered.

Applying the same strategy as in the proof of Lemma \ref{lem1}, we write 
$\lambda=l\gamma+\sum_{i=i}^{4} l_i \gamma^{q^i}$ where $\{\gamma,\gamma^q,\ldots,\gamma^{q^4}\}$ is a normal basis of $\mathbb{F}_{q^5}$ over $\mathbb{F}_q$. 
In this way, the set of solutions of (\ref{char1}) coincides with the set of $\mathbb{F}_q$-rational points of an algebraic variety $\mathcal{V}$ in $\mathbb{A}^5(\mathbb{F}_q)$ given by
ten equations

\begin{equation} \label{normb}
C_i(l,l_1,l_2,l_3,l_4)=0,\qquad i=1,2,\ldots,10.
\end{equation}
Applying the same birational map as in the proof of Lemma \ref{lem1} the algebraic variety is $
\mathbb{F}_{q^5}$-isomorphic to
\begin{equation} 
\mathcal{V}_1:
\begin{cases}
l \cdot l_1 \cdot l_2 \cdot l_3 \cdot l_4=-1, \\ 
l_1 \beta^{q^3+q+1}+\beta^{q^3}(1-l_1 \alpha^q)(1-l \alpha)-l_2 \cdot l_1 \cdot l \cdot (1-l_3 \alpha^{q^3}) \beta^{q+1}=0.
\end{cases}
\end{equation}
Since in these variables the action of the Frobenius morphism is just a shift of coordinates, $\mathcal{V}_1$ is also 
isomorphic to 
\begin{equation} \label{altro}
\begin{cases}
EQ_1: \ l \cdot l_1 \cdot l_2 \cdot l_3 \cdot l_4=-1, \\ 
EQ_2: \ l_1 \beta^{q^3+q+1}+\beta^{q^3}(1-l_1 \alpha^q)(1-l \alpha)-l_2 \cdot l_1 \cdot l \cdot(1-l_3 \alpha^{q^3}) \beta^{q+1}=0,\\
EQ_{2q}: \ l_2 \beta^{q^4+q^2+q}+\beta^{q^4}(1-l_2 \alpha^{q^2})(1-l_1 \alpha^q)-l_3 \cdot l_2 \cdot l_1 \cdot (1-l_4 \alpha^{q^4}) \beta^{q^2+q}=0,\\
EQ_{2q^2}: \ l_3 \beta^{1+q^3+q^2}+\beta(1-l_3 \alpha^{q^3})(1-l_2 \alpha^{q^2})-l_4 \cdot l_3 \cdot l_2 \cdot (1-l \alpha) \beta^{q^3+q^2}=0,\\
EQ_{2q^3}: \ l_4 \beta^{q+q^4+q^3}+\beta^{q}(1-l_4 \alpha^{q^4})(1-l_3 \alpha^{q^3})-l \cdot l_4 \cdot l_3 \cdot (1-l_1 \alpha^{q}) \beta^{q^4+q^3}=0,\\
EQ_{2q^4}: \ l \beta^{q^2+1+q^4}+\beta^{q^2}(1-l \alpha)(1-l_4 \alpha^{q^4})-l_1 \cdot l \cdot l_4 \cdot (1-l_2 \alpha^{q^2}) \beta^{1+q^4}=0, \end{cases}
\end{equation}

Hence in the following we will prove that $L_{\alpha,\beta}$ with $\alpha^q/\beta^{q+1} \not\in \mathbb{F}_q$ is not maximum scattered proving that \eqref{normb} have an $\mathbb{F}_{q}$-rational
solution.
To this aim we will study the variety $\mathcal{V}_2$ proving that it is equivalent to an algebraic curve of degree 4. Since the dimension is a birational invariant this will show that also $\mathcal{V}$ is an algebraic curve. Showing that the curve of degree $4$ is absolutely irreducible of genus at most $3$, and using again that genus and irreducibility are invariant, we will obtain the same properties for $\mathcal{V}$. At this point, the existence of an $\mathbb{F}_{q^5}$-rational point of $\mathcal{V}$ will be ensured by the Hasse-Weil Theorem. 

According to this general strategy, we start with the following technical lemma.

\begin{sloppypar}
\begin{lemma} \label{prel1}
Let $\alpha,\beta \in \mathbb{F}_{q^5}$ with $\beta \ne 0$ and $\alpha^q/\beta^{q+1} \in \mathbb{F}_{q^5} \setminus \mathbb{F}_q$. Then the variety $\mathcal{V}_2$ (and hence also $\mathcal{V}$), is equivalent to the quartic curve
$\mathcal{Q}:~F(X,Y)=0$, where

%\scalebox{0.8}{\parbox{\linewidth}{
\begin{gather}
 F(X,Y)=X^2  Y^2 \beta^{q^2+q+1} \alpha^{q^4+2q^3+q^2} - X^2  Y^2\beta \alpha^{q^4+2q^3+2q^2} -
            X^2  Y^2 \beta^{q^3+2q^2+q}\alpha^{q^3} \nonumber \\
+ X^2  Y^2 \beta^{q^3+q^2}\alpha^{q^3+q^2} +
            X^2  YN(\beta) \alpha^{q^3+q^2} - 2X^2  Y \beta^{q^2+q+1} \alpha^{q^4+q^3+q^2} \nonumber \\           
             -X^2  Y\beta^{q^4+q^3+1} \alpha^{q^3+2q^2}
+ 2X^2  Y \beta \alpha^{q^4+q^3+2q^2} +
            X^2  Y \beta^{q^3+2q^2+q} \nonumber \\
             - X^2  Y \beta^{q^3+q^2}\alpha^{q^2} - X^2N(\beta)\alpha^{q^2} +
            X^2\beta^{q^2+q+1}\alpha^{q^4+q^2} 
+ X^2\beta^{q^4+q^3+1}\alpha^{2q^2} \nonumber \\
 - X^2 \beta \alpha^{q^4+2q^2} +X  Y^2 \beta^{q^3+2q^2+q+1}\alpha^{q^4+q^3} - X  Y^2 \beta^{2q^3+q^2+q+1}\alpha^{q^4}\nonumber \\ 
          -  2 X  Y^2 \beta^{q^3+q^2+1}\alpha^{q^4+q^3+q^2} + 2 X  Y^2 \beta \alpha^{q^4+2q^3+q^2} + X  Y^2 \beta^{2q^3+2q^2}  \nonumber \\
          - X  Y^2 \beta^{q^3+q^2}\alpha^{q^3}
 + X  YN(\beta) \beta^{q^3+q^2} -   X  Y \beta^{q^3+2q^2+q+1}\alpha^{q^4} \nonumber \\
  - X  YN(\beta)\alpha^{q^3} + 2X  Y \beta^{q^2+q+1}\alpha^{q^4+q^3}
 - X  Y \beta^{q^4+2q^3+q^2+1} \alpha^{q^2}  \nonumber \\
 + 2 X Y \beta^{q^3+q^2+1}\alpha^{q^4+q^2} + 2 X  Y \beta^{q^4+q^3+1}\alpha^{q^3+q^2} - 4 X  Y \beta\alpha^{q^4+q^3+q^2}
\nonumber \\    
-    X  Y \beta^{2q^3+2q^2+q} \alpha - X  Y\beta^{q^4+2q^3+2q^2} \alpha^{q} +
            X  Y \beta^{q^3+q^2}N(\alpha) \nonumber \\
+ X  Y\beta^{q^3+q^2}
 + XN(\beta) 
-X \beta^{q^2+q+1} \alpha^{q^4} - 2 X \beta^{q^4+q^3+1} \alpha^{q^2}  \nonumber \\
+ 2 X \beta\alpha^{q^4+q2} +
            X \beta^{q^4+2q^3+q^2}\alpha^{q^2+q+1} - X \beta^{q^3+q^2} \alpha^{q^4+q^2+q+1} 
- Y^2\beta^{2q^3+2q^2+1}\alpha^{q^4} \nonumber \\
 +             2Y^2 \beta^{q^3+q^2+1} \alpha^{q^4+q^3} - Y^2 \beta\alpha^{q^4+2q^3} + Y \beta^{q^4+2q^3+q^2+1} - 2 Y \beta^{q^3+q^2+1} \alpha^{q^4}\nonumber \\
 - Y\beta^{q^4+q^3+1} \alpha^{q^3} + 2Y\beta \alpha^{q^4+q^3} +
       Y \beta^{2q^3+2q^2} \alpha^{q^4+q+1} - Y \beta^{q^3+q^2}\alpha^{q^4+q^3+q+1} \nonumber \\
             + \beta^{q^4+q^3+1} - \beta \alpha^{q^4} 
-\beta^{q^4+2q^3+q^2}\alpha^{q+1} +
 \beta^{q^3+q^2}\alpha^{q^4+q+1}.\label{quartica}
 \end{gather}
% }}
\end{lemma}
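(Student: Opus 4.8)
The plan is to exploit the cyclic structure of the system \eqref{altro}. The crucial observation is that, once the products $(1-l_i\alpha^{q^i})(1-l_j\alpha^{q^j})$ are expanded, each of the five equations $EQ_2,EQ_{2q},EQ_{2q^2},EQ_{2q^3},EQ_{2q^4}$ is affine-linear in exactly one of the variables: $EQ_2$ in $l_2$, $EQ_{2q}$ in $l_3$, $EQ_{2q^2}$ in $l_4$, $EQ_{2q^3}$ in $l$, and $EQ_{2q^4}$ in $l_1$. This is forced, since these five equations are the Frobenius shifts of one another, so the index pattern simply rotates. Moreover $EQ_2$, $EQ_{2q}$, $EQ_{2q^2}$ together involve only the two variables $l,l_1$ and the three variables $l_2,l_3,l_4$. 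I would therefore take $X=l$ and $Y=l_1$ as the coordinates of the target plane and use precisely these three equations to eliminate $l_2,l_3,l_4$.

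Concretely, $EQ_{2q^2}$ expresses $l_4$ as a rational function of $l,l_2,l_3$; substituting this into $EQ_{2q}$ and then using $EQ_2$ (which is linear in $l_2$) to remove $l_2$ leaves a single relation in $l_3$ with coefficients in $\mathbb{F}_{q^5}(X,Y)$. The standing hypothesis $\alpha^q/\beta^{q+1}\in\mathbb{F}_{q^5}\setminus\mathbb{F}_q$ is exactly what guarantees that the leading coefficient of this relation does not vanish identically and that the relation has degree one in $l_3$; hence $l_3$, and then $l_2$ and $l_4$, are determined uniquely and rationally by $(X,Y)$. This rational section $(X,Y)\mapsto(l,l_1,l_2,l_3,l_4)$ inverts the projection $(l,l_1,l_2,l_3,l_4)\mapsto(l,l_1)$, so that projection is birational onto its image; this is what lets dimension, absolute irreducibility and genus be transferred between $\mathcal{V}_2$ and the image curve.

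It then remains to identify the image. Substituting the solved expressions for $l_2,l_3,l_4$ into the three equations not yet used, namely $EQ_1$, $EQ_{2q^3}$ and $EQ_{2q^4}$, and clearing denominators, one obtains polynomials in $X,Y$ that vanish along the image. After removing the spurious factors introduced when clearing denominators (the denominators being among the quantities that the hypothesis keeps nonzero on $\mathcal{V}_2$), these share the common factor $F(X,Y)$ displayed in \eqref{quartica}; thus the image is the quartic $\mathcal{Q}\colon F(X,Y)=0$, and $\mathcal{V}_2$ is birationally equivalent to $\mathcal{Q}$, as claimed.

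I expect the principal difficulty to be computational rather than conceptual: eliminating $l_2,l_3,l_4$ and extracting the common factor are very large symbolic manipulations, since the exponents run through high powers of $q$ and $F$ already comprises some forty monomials, so in practice this has to be carried out and certified with a computer algebra system. The two points that genuinely require care are (i) confirming that the three linear equations solve for $l_2,l_3,l_4$ uniquely, so that the projection is birational and not merely dominant, and (ii) isolating the exact place where $\alpha^q/\beta^{q+1}\notin\mathbb{F}_q$ is used, this being the condition that prevents the leading coefficients in the elimination from vanishing and so keeps $\mathcal{Q}$ an honest quartic rather than degenerating; the complementary case $\alpha^q/\beta^{q+1}\in\mathbb{F}_q$ is exactly the one excluded from the hypothesis.
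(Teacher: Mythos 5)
Your overall plan coincides with the paper's: exploit the multilinearity of \eqref{altro} to eliminate three of the five unknowns rationally, project $\mathcal{V}_2$ onto a coordinate plane, identify the image with a quartic, and transfer dimension, irreducibility and genus through the birational correspondence. But your elimination order fails at its key step. (A small misstatement first: each of $EQ_2,\dots,EQ_{2q^4}$ is affine-linear in \emph{each} of the four variables it involves, not ``in exactly one''.) The serious problem is the degree of your eliminant. Solving $EQ_{2q^2}$ for $l_4$ and $EQ_2$ for $l_2=N/\bigl(l_1\,l\,\beta^{q+1}(1-l_3\alpha^{q^3})\bigr)$, where $N=l_1\beta^{q^3+q+1}+\beta^{q^3}(1-l_1\alpha^q)(1-l\alpha)$, and substituting into $EQ_{2q}$, you must clear the denominators $(1-l\alpha)$ and $(1-l_3\alpha^{q^3})$; the two terms inherited from the $l_4$-substitution then acquire factors $l_3(1-l_3\alpha^{q^3})$ and $(1-l_3\alpha^{q^3})^2$, and collecting them the coefficient of $l_3^{2}$ is a nonzero multiple of $\alpha^{q^3}-\beta^{q^3+q^2}=(\alpha^{q}-\beta^{q+1})^{q^2}$. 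Under the hypothesis of the lemma $\alpha^q/\beta^{q+1}\neq 1$, so this coefficient does \emph{not} vanish: ironically, the very hypothesis you invoke to get a degree-one relation forces your relation to be genuinely quadratic in $l_3$. Consequently the fiber of your projection over a generic $(l,l_1)$ is not visibly a single point, and the birationality claim --- the heart of the lemma --- is not established by your argument.

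The paper's remedy is precisely the equation you relegated to the ``image'' side: it uses $EQ_1$, i.e.\ $l\,l_1l_2l_3l_4=-1$, for the \emph{first} elimination. With $l_4=-1/(l\,l_1l_2l_3)$ the cubic product term in $EQ_{2q^2}$ collapses to $(1-l\alpha)\beta^{q^3+q^2}/(l\,l_1)$, so after clearing the monomial denominator $EQ_{2q^2}$ is linear in $l$, giving $l=-\beta^{q^3+q^2}/P(l_1,l_2,l_3)$; substituting this, $EQ_{2q}$ becomes honestly linear in $l_1$, namely $C_1(l_2,l_3)\,l_1+C_2(l_2,l_3)=0$, and the remaining equations reduce to $F(l_2,l_3)=0$, so the quartic lives in the $(l_2,l_3)$-plane. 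Two further points you gloss over. First, the degenerate locus $C_1=C_2=0$ is \emph{not} excluded by $\alpha^q/\beta^{q+1}\notin\mathbb{F}_q$: it occurs at one explicit point, and the paper needs a lengthy separate analysis (its Case 1) to show that this point, too, corresponds to a point of the quartic; that is where the hypothesis is invoked repeatedly, not merely to keep a leading coefficient from vanishing identically. Second, even if your elimination were repaired, with coordinates $(l,l_1)$ the image would not be literally \eqref{quartica}: the shift-plus-Frobenius symmetry of \eqref{altro} shows the projection to $(l,l_1)$ has image the quartic with $\alpha,\beta$ replaced by $\alpha^{q^3},\beta^{q^3}$, a Frobenius twist of $F$ --- harmless for irreducibility and genus, but your identification of the image curve should account for it.
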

\end{sloppypar}

\begin{proof}
The following computations can be checked using MAGMA. The system of equations \eqref{altro}
admits a solution if and only if $l_4=-1/(l \cdot l_1 \cdot l_2\cdot l_3)$ and $EQ_{2}$,$EQ_{2q}$ and $EQ_{2q^i}$ evaluated at $(l,l_1,l_2,l_3,-1/(l \cdot l_1 \cdot l_2\cdot l_3))$ are equal to zero for all $i=2,\ldots,4$.
Clearly $l,l_1,l_2,l_3,l_4 \ne 0$.
Since 
$$EQ_{2q^2}(l,l_1,l_2,l_3,-1/(l \cdot l_1 \cdot l_2\cdot l_3)): \ l \cdot (l_1\cdot l_2 \cdot l_3 \cdot \beta \alpha^{q^3+q^2}-l_1\cdot l_2 \cdot \beta \alpha^{q^2}$$
$$+l_1 \cdot l_3 \cdot \beta^{q^3+q^2+1}-l_1 \cdot l_3 \cdot \beta \alpha^{q^3}+l_1 \beta-\beta^{q^3+q^2}\alpha)+\beta^{q^3+q^2}=0,$$
we get 
$$l_4=-\frac{1}{l \cdot l_1 \cdot l_2 \cdot l_3}, \ l=\frac{-\beta^{q^3+q^2}}{P(l_1,l_2,l_3)},$$
with
\[
\scriptsize
P(l_1,l_2,l_3)=l_1\cdot l_2 \cdot l_3 \cdot \beta \alpha^{q^3+q^2}-l_1\cdot l_2 \cdot \beta \alpha^{q^2}+l_1 \cdot l_3 \cdot \beta^{q^3+q^2+1}-l_1 \cdot l_3 \cdot \beta \alpha^{q^3}+l_1 \beta-\beta^{q^3+q^2}\alpha
\] 
and $EQ_{2}$,$EQ_{2q}$ and $EQ_{2q^i}$ evaluated at $(-\beta^{q^3+q^2}/P(l_1,l_2,l_3),l_1,l_2,l_3,-1/(l \cdot l_1 \cdot l_2\cdot l_3))$ are equal to zero for $i=3,4$.
Clearly $P(l_1,l_2,l_3) \ne 0$ as $\beta \ne 0$. 
Now, $EQ_{2q}=0$ implies
$$C_1(l_2,l_3)l_1+C_2(l_2,l_3)=0,$$
where

$$C_1(l_2,l_3)=l_2 l_3 \beta^{q+1}\alpha^{q^4+q^3+q^2} - l_2 \cdot l_3 \beta^{q^3+q^2+q} - l_2 \beta^{q+1}\alpha^{q^4+q^2}+l_2 \beta^{q^4+q^3}\alpha^{q^2+q} $$
$$+ l_3 \beta^{q^3+q^2+q+1}\alpha^{q^4} - l_3\beta^{q+1}\alpha^{q^4+q^2} + \beta^{q+1}\alpha^{q^4} 
        - \beta^{q^4+q^3}\alpha^q,$$
and
$$C_2(l_2,l_3)=  \beta^{q^3(}l_2 \beta^{q^4+q^2+q} - l_2 \beta^{q^4}\alpha^{q^2} - \beta^{q^2+q} \alpha^{q^4+1}+ \beta^{q^4}).$$
\begin{sloppypar}
We distinguish two cases: $C_1(l_2,l_3)=C_2(l_2,l_3)=0$ or $l_1=-C_2(l_2,l_3)/C_1(l_2,l_3)$. 
\end{sloppypar}
\begin{itemize}
\item \textbf{Case 1:  $C_1(l_2,l_3)=C_2(l_2,l_3)=0$.} Hence 
$$l_2=\frac{\beta^{q^2+q}\alpha^{q^4+1}-\beta^{q^4}}{\beta^{q^4+q^2+q}-\beta^{q^4}\alpha^{q^2}},$$
and
$$l_3=-P_2/P_1,$$
where
$$
P_1=N(\beta)\alpha^{q^4} - \beta^{q^4+q+1}\alpha^{q^4+q^3} + \beta^{q+1}\alpha^{2q^4+q^3+q^2+1} - \beta^{q^4+q^3+1}\alpha^{q^4+q^2} $$
$$- \beta^{q^4+q^2+q}\alpha^{q^4+1} + \beta^{q^4+q^3},
$$
$$
P_2=\beta^{q^4+q+1}\alpha^{q^4} -\beta^{q+1}\alpha^{2q^4+q^2+1} - \beta^{2q^4+q^3}\alpha^q + \beta^{q^4+q^3}\alpha^{q^4+q^2+q+1}.$$

Indeed if $P_1=P_2=0$ then $\alpha^q/\beta^{q+1} \in \mathbb{F}_q$. 
This fact can observed noting that from $P_2=0$, either $\beta=(\beta^{q^4+q^3} \alpha^q)/(\beta^q\alpha^{q^4})$ or $\beta=\alpha^{q^3+q+1}$. In the former case $\alpha^q/\beta^{q+1}=\alpha^{q^4}/\beta^{q^4+q^3}=(\alpha^q/\beta^{q+1})^{q^3}$. In the latter case $\alpha^q/\beta^{q+1}=1/N(\alpha)$. 
Substituting $l_2$ and $l_3$ in $EQ_{2q^3}$ we get

$$l_1Q_1+Q_2=0,$$
where

$$Q_1=\beta^{q^4+2q^3+q^2}(\beta^{q^4}-\alpha^{q^4+q^2+1})(\beta^{q+1}-\alpha^q)(\beta^{2q+q^2}\alpha^{q^4+1}  - \beta^{2q^4+q^3+q^2+q}\alpha^{q} $$
$$+ \beta^{2q^4+q}\alpha^{q^3+q} - \beta^{q^4+q}N(\alpha) -\beta^{q^4+q}+\beta^{2q^4+q^3}\alpha^{q^2+q}),$$
and
$$Q_2=\beta^{q^4+2q^3+q^2} (\beta^{q+1}-\alpha^q)^q (\beta^{1+2q+q^2+q^3+2q^4}\alpha^{q^4+1} -\beta^{1+2q+q^2+q^4}\alpha^{2q^4+1}$$
$$ -\beta^{1+2q+2q^4}\alpha^{1+q^3+q^4}+\beta^{1+2q+q^4}\alpha^{2+q^2+2q^4}
            -\beta^{1+q+q^3+2q^4}\alpha^{1+q^2+q^4} +\beta^{1+q+2q^4}\alpha^{q^4}$$
$$-
\beta^{2q+q^2+q^3+q^4}\alpha^{2+q^4} +\beta^{2q+q^2}\alpha^{2+2q^4} +\beta^{q+q^3+2q^4}\alpha +
       \beta^{q+2q^4}\alpha^{1+q+q^3+q^4}$$
$$-\beta^{q+q^4}\alpha^{q^4+1}N(\alpha)-\beta^{q+q^4}\alpha^{q^4} -\beta^{q^3+3q^4}\alpha^{q} +\beta^{q^3+2q^4}\alpha^{1+q+q^2+q^4}).$$

If $Q_1=Q_2=0$ then using again that $\alpha^q/\beta^{q+1} \not\in \mathbb{F}_q$, $\beta^{2q+q^2}\alpha^{q^4+1}  - \beta^{2q^4+q^3+q^2+q}\alpha^{q} + \beta^{2q^4+q}\alpha^{q^3+q} - \beta^{q^4+q}N(\alpha) -\beta^{q^4+q}+\beta^{2q^4+q^3}\alpha^{q^2+q}=0$ 
and
$\beta^{1+2q+q^2+q^3+2q^4}\alpha^{q^4+1} -\beta^{1+2q+q^2+q^4}\alpha^{2q^4+1}$
$ -\beta^{1+2q+2q^4}\alpha^{1+q^3+q^4}+\beta^{1+2q+q^4}\alpha^{2+q^2+2q^4}
            -\beta^{1+q+q^3+2q^4}\alpha^{1+q^2+q^4} +\beta^{1+q+2q^4}\alpha^{q^4}$
$-
\beta^{2q+q^2+q^3+q^4}\alpha^{2+q^4} +\beta^{2q+q^2}\alpha^{2+2q^4} +\beta^{q+q^3+2q^4}\alpha +
       \beta^{q+2q^4}\alpha^{1+q+q^3+q^4}$
$-\beta^{q+q^4}\alpha^{q^4+1}N(\alpha)-\beta^{q+q^4}\alpha^{q^4} -\beta^{q^3+3q^4}\alpha^{q} +\beta^{q^3+2q^4}\alpha^{1+q+q^2+q^4}=0$.

{If $\beta^{2q}\alpha^{q^4+1}- \beta^{2q^4+q^3+q}\alpha^q=0$ then also $ \beta^{2q^4+q}\alpha^{q^3+q} - \beta^{q^4+q}N(\alpha) -\beta^{q^4+q}+\beta^{2q^4+q^3}\alpha^{q^2+q}=0$ from the first equation. From the first equation $\beta^q=\beta^{2q^4+q^3} \alpha^q/\alpha^{q^4+1}$. Substituting to the second equation yields $0=\beta^{2q^4}\alpha^{q^3+q}-\beta^{q^4}N(\alpha)-\beta^{q^4}+\alpha^{q^4+q^2+1}=(\beta^{q^4}-\alpha^{q^4+q^2+1})(\beta^{q^4} \alpha^{q^3+q}-1)$. Hence $\alpha^q/\beta^{q+1} \in \mathbb{F}_q$, a contradiction.}

Hence $\beta^{q^2}=(-\beta^{q+2q^4}\alpha^{q^3+q} +\beta^{q^4+q} N(\alpha) +\beta^{q^4+q} - \beta^{2q^4+q^3}\alpha^{q^2+q})/(\beta^{2q}\alpha^{q^4+1} -\beta^{2q^4+q^3+q}\alpha^q)$ for the first equation. Substituting $\beta^{q^2}$ in the second equation we get that either $\beta^q \alpha - \beta^{q^4} \alpha^q=0$, or $\beta^q\alpha^{q^4+q^3+1}- \beta^{q^4+q^3}=0$, or $\beta^{q+1} \alpha^{q^4} - \beta^{q^4+q^3}\alpha^q=0$. 
If $\beta^q \alpha - \beta^{q^4} \alpha^q=0$ then substituting $\beta^{q^2}$ in the expressions of $l_3$ and $l_4$ above we get that $l$ and $l_4$ are function of $\alpha$ and $\beta$. This fact is compatible with the description already obtained for $l_4$ if and only if $-\beta^{2q^3+q}\alpha+ 2\beta^{q^3+q}\alpha^{q^4+q^3+q+1} -\beta^q \alpha^{2q^4+2q^3+2q+1}=\alpha\beta(\beta^{q^3}-\alpha^{q^4+q^3+q})$, which is not possible.
If the second case occurs then, subsituting again $l_2$, $l_3$ and $\beta^{q^4}$ we get that $\alpha^{q^4+1}\beta^q (\beta^q \alpha^{q^3}-\beta^{q^3} \alpha^{q^2})(\beta^q \alpha^{1+q+2q^3+q^4}\beta^{q^3})(b-\alpha^{q^3+q+1})=0$. In any case $\alpha^q/\beta^{q+1} \in \mathbb{F}_q$.
The last case implies that $\alpha^q/\beta^{q+1} \in \mathbb{F}_{q^3}$, so that again we get a contradiction.

This shows that $l_1=-Q_2/Q_1$. Substituting $l_1$ we get that all the conditions are satisfied. 
Hence the related point of $\mathcal{V}_2$ is
$$l_4=-\frac{1}{l \cdot l_1 \cdot l_2 \cdot l_3}, \ l=\frac{-\beta^{q^3+q^2}}{P(l_1,l_2,l_3)}, \ l_2=\frac{\beta^{q^2+q}\alpha^{q^4+1}-\beta^{q^4}}{\beta^{q^4+q^2+q}-\beta^{q^4}\alpha^{q^2}},$$
$$ l_3=\frac{-P_2}{P_1}, \ l_1=\frac{-Q_2}{Q_1}.$$
Substituting in $F(l_2,l_3)$ the value $l_2=\frac{\beta^{q^2+q}\alpha^{q^4+1}-\beta^{q^4}}{\beta^{q^4+q^2+q}-\beta^{q^4}\alpha^{q^2}}$ we get that $l_3=-P_2/P_1$ is a solution. This implies that $[l_1,l_2]$ is a point of the quartic.
\item \textbf{Case 2: $l_1=-C_2(l_2,l_3)/C_1(l_2,l_3)$.}
Substituting the expression of $l_1$ in $EQ_2$, $EQ_{2q^3}$ and $EQ_{2q^4}$ we get that all the conditions are satisfied once $F(l_2,l_3)=0$ (cf.\ (\ref{quartica})).

This shows that in any case a point of $\mathcal{V}_2$ corresponds uniquely to a point of the quartic $F(l_2,l_3)=0$.
Hence the variety $\mathcal{V}_2$ is a curve.

%This shows that if there exists $\ell \in \mathbb{F}_{q^5}$ such that $(\ell^{q^2},\ell^{q^3})$ is a point of the quartic written above then $(\ell_0,\ell_1,\ell^{q^2},\ell^{q^3},\ell_4)$ is a solution of \eqref{altro}, where $\ell_0=l$ and $\ell_i=l_i$ with $i=2,4$. Our aim is to show that $\ell_0=\ell$ and $\ell_i=\ell^{q^i}$ for $i=1,2$. 
%Computing the resultant $R$ of $Q$ and $EQ_{2,q}$ wih respect to $l_3$ one get that $HH_{q^4}$ divides $R$. Since $\ell$ is a root both of $Q[\ell]:=Q[\ell^{q^2},\ell^{q^3}]$ and $HH_{q^4}[\ell]=HH_{q^4}[\ell^q,\ell^{q^2}]$ we have that $EQ_{2q}[\ell]=EQ_{2q}[\ell^q,\ell^{q^2},\ell^{q^3}]=0=EQ_{2q}[\ell_1,\ell^{q^2},\ell^{q^3}]$. Since $EQ_{2q}[l_1,\ell^{q^2},\ell^{q^3}]$ is linear in $l_1$ we get $\ell_1=\ell^q$.
%Since $\ell \in \mathbb{F}_{q^5}$ then also $EQ_{2q^2}[\ell]=EQ_{2q^2}[l,\ell^q,\ell^{q^2},\ell^{q^3}]=0=EQ_{2q^2}[\ell_0,\ell^q,\ell^{q^2},\ell^{q^3}]$. Using again the linearity of $EQ_{2q^2}[l,\ell^q,\ell^{q^2},\ell^{q^3}]$ we get $\ell_0=\ell$. The fact that $\ell_4=\ell^{q^4}$ follows from $EQ_{2q^2}[\ell]=EQ_{2q^2}[\ell,\ell^q,\ell^{q^2},\ell^{q^3},\ell^{q^4}]=0=EQ_{2q^2}[\ell,\ell^q,\ell^{q^2},\ell^{q^3},\ell_4]$ together with its linearity with respect to $l_4$.
\end{itemize}
\end{proof}

Following the general strategy described before we are going to show that the quartic $\mathcal{Q}$ is absolutely irreducible. Since its genus is at most $g=(4-1)(4-2)/2=3$, and irreducibility, genus and dimension are birationally invariant, from Lemma \ref{prel1} and the Hasse-Weil bound we would obtain that the number of $\mathbb{F}_q$-rational points of $\mathcal{V}$ is at least:

$$q+1-2g\sqrt{q} \geq q+1-6\sqrt{q}>0$$
provided that $q \geq 37$. If $q<37$ it can be easily checked with MAGMA that the quartic $\mathcal{Q}$ has at least an $\mathbb{F}_{q^5}$-rational point of type $[\ell,\ell^q]$, implying by linearity of the other variables, a solution $[\ell,\ell^q,\ldots,\ell^{q^4}]$ of $\mathcal{V}_2$ and hence a solution of \eqref{char1}.

\begin{proposition} \label{nonMS}
Let $\alpha,\beta \in \mathbb{F}_{q^5}$ with $\beta \ne 0$ and $\alpha^q/\beta^{q+1} \in \mathbb{F}_{q^5} \setminus \mathbb{F}_q$. Then $L_{\alpha,\beta}$ is not maximum scattered.
\end{proposition}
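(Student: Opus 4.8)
The plan is to combine Lemma~\ref{char} with the reduction already carried out in Lemma~\ref{prel1}. By Lemma~\ref{char}, $L_{\alpha,\beta}$ fails to be maximum scattered precisely when the system \eqref{char1} admits a solution $\lambda\in\Fqq$. Writing $\lambda$ in a normal basis turns \eqref{char1} into the variety $\mathcal{V}\subseteq\mathbb{A}^5(\Fq)$, defined over $\Fq$, whose $\Fq$-rational points are exactly such solutions $\lambda$; and Lemma~\ref{prel1} shows that $\mathcal{V}$ --- through its $\Fqq$-isomorphic copy $\mathcal{V}_2$ --- is birationally equivalent over $\Fqq$ to the plane quartic $\cQ\colon F(X,Y)=0$ of \eqref{quartica}. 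Since dimension, absolute irreducibility and genus are birational invariants, it is enough to prove that $\cQ$ is absolutely irreducible: then $\mathcal{V}$ is an absolutely irreducible curve over $\Fq$ whose genus is at most $(4-1)(4-2)/2=3$, and a point count will finish the argument. Thus the whole proof reduces to the absolute irreducibility of $\cQ$.

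The core is therefore the irreducibility analysis, which I would run in the spirit of the proof of Lemma~\ref{lem1}. First I would inspect the leading form of $F$: every degree-four monomial in \eqref{quartica} is a multiple of $X^2Y^2$, so the degree-four part of $F$ equals $c\,X^2Y^2$ for a single coefficient $c\in\Fqq$, which one checks is nonzero under the standing hypotheses. Hence $\cQ$ is a genuine quartic whose only points at infinity are $[1:0:0]$ and $[0:1:0]$, and a factorization of $F$ over $\overline{\Fq}$ can only be a linear factor times a cubic, or a product of two conics whose leading forms multiply to $X^2Y^2$, namely $\{X^2,Y^2\}$ or $\{XY,XY\}$. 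Any linear factor has leading part dividing $X^2Y^2$, hence is proportional to $X-\rho$ or $Y-\rho$; imposing $F(\rho,Y)\equiv 0$ (respectively $F(X,\rho)\equiv 0$) and clearing the resulting coefficient equations forces, exactly as in the repeated reductions of Lemma~\ref{prel1}, the relation $\alpha^q/\beta^{q+1}\in\Fq$, contrary to hypothesis. In the conic--conic case with leading forms $\{X^2,Y^2\}$, matching the absent $X^3$ and $Y^3$ coefficients of $F$ shows each conic is univariate, hence splits into lines, and we fall back to the previous case. The remaining $\{XY,XY\}$ case, in which both conics have the shape $c_iXY+a_iX+b_iY+d_i$, is settled by comparing the coefficients of $F$ degree by degree; the resulting system is incompatible unless again $\alpha^q/\beta^{q+1}\in\Fq$. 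This establishes the absolute irreducibility of $\cQ$.

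With $\cQ$ absolutely irreducible, $\mathcal{V}$ is an absolutely irreducible curve over $\Fq$ of genus $g\le 3$, so the Hasse--Weil bound gives
\[
\#\mathcal{V}(\Fq)\ \ge\ q+1-2g\sqrt q\ \ge\ q+1-6\sqrt q\ >\ 0
\]
for every $q\ge 37$; any such $\Fq$-point yields a $\lambda\in\Fqq$ solving \eqref{char1}. For the finitely many $q<37$ I would instead run a direct MAGMA search for a Frobenius-compatible point $(\ell,\ell^q)$ on $\cQ$, which by the explicit expressions for the remaining coordinates in Lemma~\ref{prel1} lifts to a full solution $[\ell,\ell^q,\ell^{q^2},\ell^{q^3},\ell^{q^4}]$ of $\mathcal{V}_2$, hence to a solution of \eqref{char1}. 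In every case \eqref{char1} is solvable, so by Lemma~\ref{char} the linear set $L_{\alpha,\beta}$ is not maximum scattered.

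The main obstacle is unquestionably the absolute irreducibility of $\cQ$: the coefficients of $F$ are high-degree monomials in $\alpha,\beta$, so the coefficient comparisons in the $\{XY,XY\}$ branch are heavy and must be organized so that every subcase collapses to the forbidden relation $\alpha^q/\beta^{q+1}\in\Fq$. Two further points require care: the argument runs over $\overline{\Fq}$ rather than $\Fq$, so that birational invariance of irreducibility and genus transfers from $\cQ$ to $\mathcal{V}$; and one must track the degenerate locus where the leading coefficient $c$, or the various denominators $P(l_1,l_2,l_3)$, $P_1$, $Q_1$, $C_1$ appearing in Lemma~\ref{prel1}, vanish, checking that these loci too are incompatible with $\alpha^q/\beta^{q+1}\notin\Fq$. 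I would discharge the bulkiest manipulations with computer algebra, exactly as the resultant computations already used in Lemma~\ref{prel1}.
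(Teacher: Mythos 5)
Your proposal is correct and takes essentially the same route as the paper: reduce, via Lemmas \ref{char} and \ref{prel1} and birational invariance, to the absolute irreducibility of the quartic $\cQ$, rule out line--cubic and conic--conic factorizations by coefficient comparison (delegated to computer algebra), and conclude with the Hasse--Weil bound for $q\ge 37$ together with a direct computer search for smaller $q$, exactly as in the paper's Steps 1 and 2 and the discussion preceding Proposition \ref{nonMS}. Your one organizational refinement---constraining the shapes of possible factors by the leading form $c\,X^2Y^2$---is legitimate and does slightly streamline the paper's case analysis, since the claim $c\neq 0$ that you left unchecked does hold: $c$ factors as $\alpha^{q^3}\bigl(\beta^{q+1}-\alpha^{q}\bigr)^{q}\bigl(\beta\alpha^{q^4+q^3+q^2}-\beta^{q^3+q^2}\bigr)$, and the last factor can vanish only if $\bigl(\alpha^{q}/\beta^{q+1}\bigr)^{q+q^2+q^3}=\N(\beta)^{-1}\in\Fq$, which would force $\alpha^{q}/\beta^{q+1}\in\F_{q^3}\cap\Fqq=\Fq$, contrary to hypothesis.
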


\begin{proof}
From Lemma \ref{prel1} it is sufficient to show that the quartic $\mathcal{Q}$ is absolutely irreducible. 
The irreducibility of $\mathcal{Q}$ is equivalent to the non-existence of lines or quadrics as components.

\begin{itemize}
\item \textbf{Step 1: $\mathcal{Q}$ has not linear components.}
Suppose that $\mathcal{Q}$ is the product of a line and a (possibly reducible) cubic defined respectively by the affine polynomial:
$$L_1:=A_1 X+A_2 Y+A_3=0,$$
$$C_1:=B_1 X^3+B_2 Y^3+B_3 X^2 Y+B_4 Y^2 X+B_5 X^2+B_6 Y^2$$
$$+B_7 XY+B_8 X+B_9 Y+B_{10}=0.$$
Then forcing the polynomial $F-L_1\cdot C_1$ to be identically zero we get that $A_1 B_1=0$ and $A_1 B_3=-A_2 B_1$. If $A_1=0$ then since $A_2 \ne 0$ we get $B_1=0$ from the second equation. Thus, $B_1=0$ can be assumed. Analogously, from $A_2 B_2=0$ and $A_2 B_4=-A_1 B_2$ we get that $B_2=0$. Since $A_1 B_3=A_1 B_5=0$ we distinguish two cases.

\noindent\underline{Case 1.}
$A_1=0$. Since $A_2 \ne 0$ we get $B_6=0$. Since $A_2 B_9= \alpha^{q^4}(\beta^{q+1}-\alpha^{q})^{q^2} \beta$ we get that $A_2 B_9 \ne 0$ and $B_9$ can be written with respect to $A_2$. Substituting we get $\alpha^{q^2}\beta(\beta^{q+1}-\alpha^q)^{q^3+q^2}= A_3 B_5$, and hence $B_5$ can be written with respect to $A_3$ and $A_3 \ne 0$. From $A_2^2 A_3 B_4=0$ we get $B_4=0$. Other conditions that can be obtained at this point are

\scalebox{0.8}{\parbox{\linewidth}{
$$B_{10}=\frac{-\beta^{q^4+q^3+1} A_2 A_3 +\beta \alpha^{q^4}A_2 A_3 +\beta^{q^4+2q^3+q^2}\alpha^{q+1} A_2 A_3 -\beta^{q^4+q^2} \alpha^{q^4+q+1}A_2 A_3}{A_2 A_3^2},$$

$$B_3=\frac{-\beta^{q^2+q+1}\alpha^{q^4+2q^3+q^2}A_2 A_3 + \beta \alpha^{q^4+2q^3+2q^2}A_2 A_3 }{A_2^2 A_3}$$
$$\frac{+\beta^{q^3+2q^2+q}\alpha^{q^3} A_2 A_3 - \beta^{q^3+q^2}\alpha^{q^3+q^2}A_2 A_3}{A_2^2 A_3},$$

$$B_7=\frac{\beta^{q^3+2q^2+q+1}\alpha^{q^4+q^3}A_2 A_3 +\alpha^{q^4+2q^3}\beta^{q^2+q+1} A_2 A_3 +2\beta^{q^3+q^2+1} \alpha^{q^4+q^3+q^2}A_2 A_3 }{A_2^2 A_3}$$
$$\frac{- 2\beta \alpha^{q^4+2q^3+q^2} A_2 A_3 -\beta^{2q^3+2q^2} A_2 A_3 +\beta^{q^3+q^2}\alpha^{q^3} A_2 A_3}{A_2^2 A_3},$$

$$B_8=-\frac{N(\beta)A_2 -\beta^{q^2+q+1}\alpha^{q^4} A_2 - 2\beta^{q^4+q^3+1}\alpha^{q^2}A_2 + 2\beta \alpha^{q^4+q^2}A_2 }{A_2 A_3}$$
$$\frac{+\beta^{q^4+2q^3+q^2} \alpha^{q^2+q+1} A_2 -\beta^{q^3+q^2}\alpha^{q^4+q^2+q+1}A_2}{A_2 A_3}.$$}}

\noindent
Since we get $\beta^{q^3+2q^2+q+1} \alpha^{q^4+q^3} A_2 A_3=0$, we have a contradiction.

%\item 

\scalebox{0.9}{\parbox{\linewidth}{\underline{Case 2.} $A_1 \ne 0$ and $B_3=B_5=0$. In this case,
$$B_8=\frac{N(\beta)\alpha^{q^2} -\beta^{q^2+q+1}\alpha^{q^4+q^2} -\beta^{q^4+q^3+1}\alpha^{2q^2}+ \beta \alpha^{q^4+2q^2}}{A_1},$$

$$B_4=\frac{-\beta^{q^2+q+1}\alpha^{q^+2q^3+q^2} + \beta \alpha^{q^4+2q^3+2q^2} +\beta^{q^3+2q^2+q}\alpha^{q^3} -\beta^{q^3+q^2}\alpha^{q^3+q^2}}{A_1},$$

$$B_7=-\frac{N(\beta)\alpha^{q^3+q^2}- 2\beta^{q^2+q+1}\alpha^{q^4+q^3+q^2} }{A_1}$$
$$\frac{-\beta^{q^3+q^2+1}\alpha^{q^3+2q^2} + 2\beta \alpha^{q^4+q^3+2q^2}+\beta^{q^3+2q^2+q} -\beta^{q^3+q^2}\alpha^{q^2}}{A_1}.$$

From the degree $3$ term $Y^3 \beta^{q^2+q+1}\alpha^{q^4+2q^3+q^2}A_2 - Y^3 \beta \alpha^{q^4+2q^3+2q^2} A_2 - Y^3 \beta^{q^3+2q^2+q} \alpha^{q^3} A_2 + Y^3 \beta^{q^3+q^2} \alpha^{q^3+q^2}A_2$ we get that $\beta \alpha^{q^4+q^3+q^2} - \beta^{q^3+q^2}=0$. Using this fact the expressions of $B_6$, $B_{10}$ and $B_9$ can be obtained with respect to $\alpha,\beta$ and the $A_i$'s and substituting them we get that either $A_2=0$ or $\beta^{q^3+q^2}\alpha^{q^4}A_1 - \beta^{q^4+q^3}\alpha^{q^2}A_2 +\alpha^{q^4+q^2}A_2 - \alpha^{^4+q^3}A_1=0$.

If $A_2=0$ then $A_3=-(\beta^{q^3+q^2}\alpha^{q^4+1}A_1-\alpha^{q^4+q^3+1}A_1)/(\beta^{q^4+q^3+q^2} -\alpha^{q^4+q^3+q^2+1})$ and subtituting we get $A_1^2 \alpha^{q^4+1} \beta^{2q^3+2q^2}(\beta^{q+1}-\alpha^q)^{q^3+q^2}(\beta^{q^4+q^3+q^2}- \alpha^{q^4+q^3+q^2})=0$, a contradiction.

Hence $A_2=-(\beta^{q^3+q^2}\alpha^{q^4}A_1- \alpha^{q^4+q^3}A_1)/(\beta^{q^4+q^3}\alpha^{q^2}+ \alpha^{q^4+q^2})$ and substituting  $A_1^2 \alpha^{q^4+1} \beta^{2q^3+2q^2}(\beta^{q+1}-\alpha^q)^{q^3+q^2}(\beta^{q^4+q^3+q^2}- \alpha^{q^4+q^3+q^2+1})=0$, a contradiction. 
This shows that $\mathcal{Q}$ has not a linear component.}}

\item \textbf{Step 2: $\mathcal{Q}$ is not the product of two irreducible quadrics.}
Assume that $\mathcal{Q}$ is the product of two absolutely irreducible quadrics defined by the affine polynomials
$$Q_1:=A_1 X^2 + A_2 Y^2 + A_3 X Y + A_4 X + A_5 Y - A_6=0,$$
$$Q_2:=B_1 X^2 + B_2 Y^2 + B_3 X Y + B_4 X + B_5 Y - B_6=0.$$

We force the bivariate polynomial $F(X,Y)-Q_1 \cdot Q_2$ to be identically zero.

From the coefficients of the polynomial $P$ we see that without loss of generality $A_2=0$ and $B_1=0$. Indeed $A_1 B_1=0$ and if $A_1=0$ using that $A_3 \ne 0$ we get $B_1=0$ from $-A_3 B_1=0$. From $A_5B_2=A_3B_2=A_1B_3=A_1B_4=0$ we get that either $B_2=0$ and since $B_3 \ne 0$, also $A_1=0$, or $B_2 \ne 0$, $A_3=A_5=0$ and from $A_1 \ne 0$ also $B_3=B_4=0$. We note that the latter case canno occur since otherwise $Q_1$ and $Q_2$ would be univariate polynomials and hence not curves. From $\beta^{2q^3+2q^2+1}\alpha^{q^4} - 2\beta^{q^3+q^2+1}\alpha^{q^4+q^3}+ \beta \alpha^{q^4+2q^3} - A_5 B_5=0$ and $\beta^{2q^3+2q^2+1}\alpha^{q^4} - 2\beta^{q^3+q^2+1}\alpha^{q^4+q^3}+ \beta \alpha^{q^4+2q^3}=\alpha^{q^4}\beta(\beta^{q^3+q^2}-\alpha^{q^3}) \ne 0$, we get that $A_5 \ne 0$ and $B_5 \ne 0$. In particular, $B_5$ can be written as a function of $A_5$, $\alpha$ and $\beta$. Substituting in $P(X,Y)$ we get that $\beta^{q^4+q^3+q^2+q+1}\alpha^{q^2} - \beta^{q^2+q+1}\alpha^{q^4+q^2} - \beta^{q^4+q^3+1}\alpha^{2q^2}+ \beta \alpha^{q^4+2q^2}- A_4 B_4=0=\alpha^{q^2}\beta (\beta^{q^4+q^3}-\alpha^{q^4})(\beta^{q^2+q}-\alpha^{q^2})-A_4 B_4$ and $\alpha^{q^2}\beta (\beta^{q^4+q^3}-\alpha^{q^4})(\beta^{q^2+q}-\alpha^{q^2}) \ne 0$ we getas before that $A_4 \ne 0$, $B_4 \ne 0$ and $B_4$ can be written as a function of $A_4,\alpha$ and $\beta$. Substituting in $P(X,Y)$ we get in the same way that $A_3 \ne 0$ and $B_3$ is a function of $A_3,\alpha$ and $\beta$ and $B_6$ is a function of $A_5^2$, $\alpha$ and $\beta$. From $( \beta^{q^3+q^2}\alpha^{q^4}A_6 + \beta^{q^4+q^3}A_5 - \alpha^{q^4+q^3}A_6 - \alpha^{q^4}A_5)(\beta^{q^3+q^2+1}A_6 - \beta \alpha^{q^3}A_6 - \beta A_5 + \beta^{q^3+q^2}\alpha^{q+1}A_5)=0$ we distinguish two subcases.

 \noindent\underline{Case 1.}
$ \beta^{q^3+q^2}\alpha^{q^4}A_6 + \beta^{q^4+q^3}A_5 - \alpha^{q^4+q^3}A_6 - \alpha^{q^4}A_5=0$.
Here we can write $A_6$ as a function of $A_5, \alpha$ and $\beta$ and substituting in $P(X,Y)$ we have that either $A_3=-\alpha^{q^3}A_4$, or $\beta^{q^4+q^3+1}\alpha^{q^2}A_3 - \beta \alpha^{q^4+q^3+q^2}A_4 - \beta\alpha^{q^4+q^2}A_3 + \beta^{q^3+q^2}A_4=0$. 
In the former case substituting in $P(X,Y)$ we get the following two necessary conditions:

$$P_1:=(\beta^{q^2+q}A_5 - \beta^{q^3+q^2}A_4 - \alpha^{q^2}A_5 + \alpha^{q^2}A_4)$$

$$(\beta^{q^3+q^2+1}\alpha^{q^4+q^3}A_4 + \beta \alpha^{q^4+q^3+q^2}A_5 - \beta \alpha^{q^4+2q^3}A_4 - \beta^{q^3+q^2}A_5)=P_{1,1}P_{1,2}=0;$$

$$P_2:=(\beta^{q^3+q^2}\alpha^{q^4}A_4 - \beta^{q^4+q^3}\alpha^{q^2}A_5 + \alpha^{q^4+q^2}A_5 - \alpha^{q^4+q^3}A_4)$$

$$(\beta^{q^4+q^3+q^2+q+1}A_5 - \beta{q^2+q+1}\alpha^{q^4}A_5 + \beta^{q^3+q^2+1}\alpha^{q^4}A_4 - \beta^{q^4+q^3+1}\alpha^{q^2}A_5 $$
$$+
            \beta \alpha^{q^4+q^2}A_5 - \beta \alpha^{q^4+q^3}A_4 - \beta^{2q^3+2q^2}\alpha^{q^4+q+1}A_4 +
            \beta^{q^3+q^2}\alpha^{q^4+q^3+q+1}A_4)$$
$$=P_{2,1} P_{2,2}=0.$$

From the factorization of the resultant of $P_1$ and $P_2$ with respect to $A_4$ we get that

$$C_1=N(\beta)\beta \alpha^{q^3}- \beta^{q^2+q+2}\alpha^{q^4+q^3}$$
$$- \beta^{q^4+q^3+2}\alpha^{q^3+q^2}+
        \beta^{q^3+q^2+1}N(\alpha)$$
$$+ \beta^{q^3+q^2+1} -\beta^{2q^3+2q^2}\alpha^{q+1}=0.$$

This case can be excluded as follows. 

Since both the resultant of $P_{1,1}$ with $P_{2,1}$ and $P_{2,2}$ with respect to $A_4$ cannot vanish we get that $P_{1,2}=\beta^{q^3+q^2+1}\alpha^{q^4+q^3}A_4 + \beta \alpha^{q^4+q^3+q^2}A_5 - \beta \alpha^{q^4+2q^3}A_4 - \beta^{q^3+q^2}A_5=0$ so that $A_5$ can be written as a function of $A_4,\alpha$ and $\beta$. Substituting $A_4$ in $P(X,Y)$ gives

$$C_2:=\beta^{q^4+q+3}\alpha^{q^4+3q^3+q^2}- \beta^{q^3+q^2+1}N(\beta)\alpha^{q^3} + \beta^{q^3+2q^2+q+2}\alpha^{q^4+q^3}$$
$$ -\beta^{q^3+q^2+q+2}\alpha^{q^4+2q^3+q^2+1}
 -\beta^{q^2+q+2}\alpha^{q^4+2q^3}+\beta^{q^4+2q^3+q^2+2} \alpha^{q^3+q^2}$$
$$
            - \beta^{q^4+q^3+q^2+2}\alpha^{q^4+2q^3+q^2+q} -\beta^{q^4+q^3+2}\alpha^{2q^3+q^2}+
            \beta^{2q^3+2q^2+q+1}\alpha^{q^3+1}$$
$$+ \beta^{q^4+2q^3+2q^2+1}\alpha^{q^3+q}- \beta^{2q^3+2q^2+1}+
            \beta^{q^3+q^2+1}N(\alpha) \alpha^{q^3} + \beta^{q^3+q^2+q}\alpha^{q^3}$$
$$ -\beta^{2q^3+2q^2}\alpha^{q^3+q+1}=0.$$

Comparing the resultants of $C_1$ and $C_2$ with respect to $\beta^q$, $\beta^{q^4}$ and $\alpha$ gives 

$$C_3:=N(\beta)\alpha^{q^3} - \beta^{q^2+q+1}\alpha^{q^4+q^3} - \beta^{q^4+q^3+1}\alpha^{q^3+q^2}+ \beta^{q^4+1}\alpha^{q^4+2q^3+q^2+q}$$
$$-
        \beta^{q^4+q^3+q^2}\alpha^{q^3+q}+ \beta^{q^3+q^2}=0,$$

which is not possible as the resultant of $C_3$ and $C_1$ with respect t $\beta^q$ cannot vanish.
Hence the second case occurs and $A_3$ can be written with respect to $A_4,\alpha$ and $\beta$. Substituting in $P(X,Y)$ the resulting expression of $A_4$ with respect to $A_5,\alpha$ and $\beta$ we get again that both $C_1=0$ and $C_2=0$ hold. Hence a contradiction can be obtained as in the previous case.

 \noindent\underline{Case 2.}
 $\beta^{q^3+q^2+1}A_6 - \beta \alpha^{q^3}A_6 - \beta A_5 + \beta^{q^3+q^2}\alpha^{q+1}A_5=0$. Substituting the expression of $A_6$ with respect to $A_5$, $\alpha$ and $\beta$ we get that either $\beta^{q^2+q}\alpha^{q^3}A_5 + \beta^{q^3+q^2}A_3 - \alpha^{q^3+q^2}A_5 - \alpha^{q^3}A_3=0$, or
$\beta^{q^3+q^2+1}\alpha^{q^4}A_3 - \beta \alpha^{q^4+q^3+q^2}A_5 - \beta \alpha^{q^4+q^3}A_3 + \beta^{q^3+q^2}A_5=0$. In the former case we can write $A_5$ with respect to $\alpha,\beta$ and $A_3$ getting again that $C_1=0$ and $C_2=0$ hold, a contradiction. In the second case again $A_5$ can be written as a function of $A_3$, $\alpha$ and $\beta$ and $A_3=\alpha^{q^3} A_4$. Since the  necessary conditions  
$N(\alpha)=1$, 
$$N(\beta)\beta^{q^3}\alpha^{q^4+q^3} - \beta^{q^3+q^2+q+1}\alpha^{2q^4+q^3}
 - \beta^{q^4+q^3+q+1}\alpha^{q^4+2q^3}$$
$$+\beta^{q^3+q+1}\alpha^{2q^4+2q^3+q^2+1}- \beta^{q+1}\alpha^{3q^4+3q^3+q^2+q+1}+ \beta^{q+1}\alpha^{2q^4+2q^3}$$
$$ - \beta^{q^4+2q^3+1}\alpha^{q^4+q^3+q^2}+ \beta^{q^4+q^3+1}\alpha^{2q^4+2q^3+q^2+q}- \beta^{2q^3+q^2+q}\alpha^{q^4+q^3+1}$$
$$+\beta^{q^3+q^2+q} \alpha^{2q^4+2q^3+q+1} - \beta^{q^4+2q^3+q^2}\alpha^{q^4+q^3+q}+ \beta^{2q^3+q^2}\alpha^{q^4}+ \beta^{q^4+2q^3}\alpha^{q^3}$$
$$ -\beta^{q^3}\alpha^{q^4+q^3}=0,$$

$$N(\beta)\beta^{q^3}\alpha^{q^4+q^3}- \beta^{q^3+q^2+q+1}\alpha^{2q^4+q^3} - \beta^{q^4+q^3+q+1}\alpha^{q^4+2q^3}+\beta^{q^3+q+1}\alpha^{2q^4+2q^3+q^2+1}$$
$$- \beta^{q+1}\N(\alpha)\alpha^{q^4+q^3}+ \beta^{q+1}\alpha^{2q^4+2q^3} -
       \beta^{q^4+2q^3+1}\alpha^{q^4+q^3+q^2} + \beta^{q^4+q^3+1}N(\alpha)$$
$$- \beta^{2q^3+q^2+q}\alpha^{q^4+q^3+1}+\beta^{q^3+q^2+q}N(\alpha)\alpha^{q^4+q^3}
     - \beta^{q^4+2q^3+q^2}\alpha^{q^4+q^3+q}$$
$$+ \beta^{2q^3+q^2}\alpha^{q^4}+\beta^{q^4+2q^3}\alpha^{q^3}-\beta^{q^3}\alpha^{q^4+q^3}=0$$
 
cannot hold simultaneously we get a contradiction.

\end{itemize} 

\end{proof}

\section{On the equivalence of $L_{\alpha,\beta}$ with known linear sets in $\PG(1,q^5)$}

According to \cite[Proposition 5.1 and Theorem 5.4]{CsMaPo2018a} the maximum scattered $\mathbb{F}_q$-linear set $L_{\alpha,\beta}$ is equivalent to some 
$L_{s}^\eta$ (see Sect.\ 1 for its definition) 
if and only if there exist $A,B,C,D,\lambda \in \mathbb{F}_{q^5}$ with $AD-BC \ne 0$, $\lambda \ne 0$ and $\tau$ automorphism of $\mathbb{F}_{q^5}$ such that 

\begin{equation} \label{eq}
\begin{pmatrix} A & B \\ C & D \end{pmatrix} \begin{pmatrix} x^{\tau} -\alpha^\tau x^{q^2 \tau} \\ x^{q \tau} -\beta^\tau x^{q^2 \tau} \end{pmatrix}=\begin{pmatrix} z \\ f_{s,\eta}(\lambda z)\end{pmatrix}
\end{equation}
where $f_{s,\eta}(z)=\eta z^{q^s}+z^{q^{5-s}}$.
We note that it is sufficient to consider the case $\lambda=1$ as
$$\begin{pmatrix} z \\ f_{s,\eta}(\lambda z)\end{pmatrix}=\begin{pmatrix} \lambda^{-1} & 0 \\ 0 & 1 \end{pmatrix}\begin{pmatrix} x \\ f_{s,\eta}(x)\end{pmatrix},$$
with $z=\lambda^{-1} x$.
We first deal with the case $s=1$. Then defining $y=x^{\tau}$, \eqref{eq} reads 

$$\begin{pmatrix} A & B \\ C & D \end{pmatrix} \begin{pmatrix} y -\alpha^\tau y^{q^2} \\ y^{q} -\beta^\tau y^{q^2} \end{pmatrix}=\begin{pmatrix} z \\ \eta z^q+z^{q^4}\end{pmatrix}.$$ 

Hence,
\begin{equation} \label{eqLP}
\begin{cases} A=0, \\ \beta^{\tau q} B^q \eta=0, \\ C \alpha^\tau+D \beta^\tau=-\eta B^q, \\ D= -\beta^{q^4 \tau} B^{q^4}, \\ C=B^{q^4}.\end{cases}
\end{equation}
As stated in section \ref{cforms}, 
if $\beta=0$ then $L_{\alpha,\beta}$ is equivalent to $L_{1}^\eta$. 
If $\beta \ne 0$ then from   \eqref{eqLP} $B=0=A$, which is not possible. The following Lemma is now proved.

\begin{lemma} \label{equivLP}
Let $L_{\alpha,\beta}$ be maximum scattered. Then $L_{\alpha,\beta}$ is equivalent to $L_{1}^\eta$ for some $\eta \in \mathbb{F}_{q^5}$ wih $N(\eta) \ne 1$ if and only if $\beta=0$ and $N(\alpha) \ne -1$. 
%More precislely, given  $\eta \in \mathbb{F}_{q^5}^*$ with $N(\eta) \ne 1$ then $L_{1}^\eta$ is equivalent to $L_{-\eta,0}$. Hene, $L_{\alpha,0}$ is maximum scattered if and only if it is equivalent to a Lunardon-Polverino linear set. 
\end{lemma}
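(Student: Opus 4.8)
The plan is to extract the biconditional directly from the linear system \eqref{eqLP}, which is the translation of the equivalence criterion \cite[Proposition 5.1 and Theorem 5.4]{CsMaPo2018a} in the case $s=1$. Having reduced to $\lambda=1$ and set $y=x^\tau$, an equivalence between $L_{\alpha,\beta}$ and $L_1^\eta$ is the same as the solvability of \eqref{eq} in $A,B,C,D\in\mathbb{F}_{q^5}$ with $AD-BC\neq0$ for some automorphism $\tau$. First I would read off $z=A(y-\alpha^\tau y^{q^2})+B(y^q-\beta^\tau y^{q^2})$ from the first row, then expand $\eta z^q+z^{q^4}$, using $y^{q^5}=y$ and $y^{q^6}=y^q$ to fold everything back into the basis $y,y^q,y^{q^2},y^{q^3},y^{q^4}$. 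Comparing the resulting coefficients with those of $C(y-\alpha^\tau y^{q^2})+D(y^q-\beta^\tau y^{q^2})$ coming from the second row yields precisely the five relations \eqref{eqLP}.

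The pivotal step is that the coefficient of $y^{q^4}$ forces $A=0$, after which $AD-BC\neq0$ gives $B\neq0$ and $C\neq0$. I would then branch on whether $\beta$ vanishes. For the implication $(\Leftarrow)$, suppose $\beta=0$: the second relation of \eqref{eqLP} is then automatically satisfied, and $C=B^{q^4}$, $D=0$ together with the third relation are consistent, forcing $\eta=-B^{q^4-q}\alpha^\tau$. Choosing $B\in\mathbb{F}_q^*$ and $\tau=\mathrm{id}$ gives $\eta=-\alpha$, so this is exactly the identification recorded in Remark~2) at the end of Section~\ref{cforms}, showing that $L_{\alpha,0}$ is projectively equivalent to the Lunardon--Polverino set $L_1^{-\alpha}$. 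Since the norm has odd degree, $N(-\alpha)=-N(\alpha)$, so $L_1^{-\alpha}$ is maximum scattered exactly when $N(-\alpha)\neq1$, i.e.\ $N(\alpha)\neq-1$; this condition holds automatically under the standing hypothesis that $L_{\alpha,0}$ is maximum scattered. This settles $(\Leftarrow)$ and simultaneously pins down the extra condition $N(\alpha)\neq-1$.

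For $(\Rightarrow)$ it remains to exclude the case $\beta\neq0$. The essential point is that $\eta\neq0$: the set $L_1^\eta$ is defined only for $N(\eta)\notin\{0,1\}$, so $\eta$ is necessarily nonzero. Then, since $\beta\neq0$ gives $\beta^\tau\neq0$, the second relation $\beta^{\tau q}B^q\eta=0$ of \eqref{eqLP} forces $B=0$; together with $A=0$ this makes the whole first row of the matrix vanish, so $AD-BC=0$, contradicting its nonsingularity. Hence $\beta\neq0$ is impossible, we must have $\beta=0$, and the equivalence is complete.

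I expect the only genuinely error-prone part to be the coefficient bookkeeping that produces \eqref{eqLP}, in particular the Frobenius reductions $y^{q^5}=y$ and $y^{q^6}=y^q$ inside $z^{q^4}$; everything after that is a short case analysis. The one conceptual ingredient worth flagging is the use of $\eta\neq0$, which is what kills the $\beta\neq0$ branch via the determinant condition; this is consistent with the fact that a maximum scattered $L_{\alpha,\beta}$ with $\alpha^q\neq\beta^{q+1}$ is not of pseudoregulus type by Proposition~\ref{equivPSE}, hence cannot be equivalent to the degenerate ($\eta=0$) member of the family.
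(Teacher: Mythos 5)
Your proposal is correct and follows essentially the same route as the paper: deriving the system \eqref{eqLP} by coefficient comparison, using the vanishing coefficient of $y^{q^4}$ to force $A=0$, killing the $\beta\neq0$ branch via $\beta^{\tau q}B^q\eta=0$ together with $AD-BC\neq0$, and identifying $L_{\alpha,0}$ with $L_1^{-\alpha}$ exactly as in Remark~2) of Section~\ref{cforms}. Your one addition—making explicit that $\eta\neq0$ (since $L_1^\eta$ requires $\N(\eta)\notin\{0,1\}$) is what licenses the deduction $B=0$—is a point the paper leaves implicit, but it is a clarification rather than a different argument.
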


We now analyze the case $s=2$. If
$$\begin{pmatrix} A & B \\ C & D \end{pmatrix} \begin{pmatrix} x^{\tau} -\alpha^\tau x^{q^2 \tau} \\ x^{q \tau} -\beta^\tau x^{q^2 \tau} \end{pmatrix}=\begin{pmatrix} z \\ \eta z^{q^2}+z^{q^3}\end{pmatrix},$$
then

\begin{equation} \label{eqS}
\begin{cases} 
C=-A^{q^3} \alpha^{q^3 \tau}-B^{q^3} \beta^{q^3 \tau}, \\ D=0, \\ \eta A^{q^2} + C \alpha^{\tau}=0, \\ A^{q^3}+\eta B^{q^2} =0, \\ B^{q^3}-\eta [A^{q^2} \alpha^{q^2 \tau}+B^{q^2} \beta^{q^2 \tau}]=0.
\end{cases} 
\end{equation}

Define $A^{q^3}=-\eta B^{q^2}$ so that $A^{q^2}=-\eta^{q^4} B^q$. 
Since $\alpha= 0$ would imply the contradiction $A=B=0$,
we can also define $C=-(\eta A^{q^2})/ \alpha^\tau=(\eta^{q^4+1}B^q)/\alpha^\tau$. At this point   \eqref{eqS} reads

$$
\begin{cases}
\eta^{q^4+1} B^q-\eta B^{q^2} \alpha^{(q^3+1) \tau}+\alpha^\tau \beta^{q^3 \tau} B^{q^3}=0, \\ B^{q^3}+\eta^{q^4+1} \alpha^{q^2 \tau} B^q -\eta B^{q^2} \beta^{q^2 \tau}=0,
\end{cases}
$$
which is equivalent to require that the polynomials
\begin{equation} \label{eqS2}
\begin{cases}
 P_1(B):=B^{q^2}-\eta^{q^4} \beta^{q \tau} B^q+\eta^{q^4+q^3} \alpha^\tau B=0, \\
P_2(B):=\alpha^{q^4 \tau} \beta^{q^2 \tau} B^{q^2} - \eta^{q^4} \alpha^{(q^4+q^2) \tau} B^q + \eta^{q^4+q^3} B=0,
\end{cases}
\end{equation}
have at least one common root $B \in \mathbb{F}_{q^5}^*$. 
Since $\alpha,\beta\neq0$, \eqref{eqS2} is equivalent to

\begin{equation} \label{eqS21}
\begin{cases}
\beta^{q^2 \tau} \alpha^{q^4 \tau} P_1(B)-P_2(B)=0, \\ P_2(B)=0.
\end{cases}
\end{equation}

Since $\beta^{q+1} \ne \alpha^{q}$ as otherwise $L_{\alpha,\beta}$ is of pseudoregulus type, \eqref{eqS21} reads
$$\begin{cases} \alpha^{q^4 \tau} \beta^{q^2 \tau} B^{q^2} - \eta^{q^4} \alpha^{(q^4+q^2) \tau} B^q + \eta^{q^4+q^3} B=0, \\ -B^q+
\frac{\eta^{q^3}(\beta^{q^2 \tau}   \alpha^{(q^4+q) \tau}-1)}{\alpha^{q^4 \tau}(\beta^{(q+1) \tau}-\alpha^{q \tau})^q} 
B=0.\\ \end{cases}$$

Since in general $-B^q+kB=0$, $B\neq0$ implies $N(k)=1$, we obtain

\begin{equation} \label{eqS22}
\begin{cases}
\alpha^{q^4 \tau} \beta^{q^2 \tau} B^{q^2} - \eta^{q^4} \alpha^{(q^4+q^2) \tau} B^q + \eta^{q^4+q^3} B=0, \\ -B^q+\frac{\eta^{q^3}(\beta^{q^2 \tau}   \alpha^{(q^4+q) \tau}-1)}{\alpha^{q^4 \tau}(\beta^{(q+1) \tau}-\alpha^{q \tau})^q} B=0, \\ 
N \bigg( \frac{\eta^{q^2}(\beta^{q \tau}   \alpha^{(q^3+1) \tau}-1)}{\alpha^{q^3 \tau}(\beta^{(q+1) \tau}-\alpha^{q \tau})} 
\bigg)=1.
\end{cases}
\end{equation}

Hence we write
$$\eta^{q^2}=\lambda \bigg(
\frac{ \alpha^{q^3}(\beta^{(q+1)}-\alpha^{q})}{\beta^{q} \alpha^{q^3+1}-1}
\bigg)^\tau$$
where $N(\lambda)=1$, so that $B^q=\lambda^q B$. Substituting in $P_2$ and recalling that $B \ne 0$ we get

$$\alpha^{q^4 \tau} \beta^{q^2 \tau} \lambda^{q^2+q}-\eta^{q^4} \alpha^{(q^4+q^2)\tau} \lambda^q+\eta^{q^4+q^3}=0$$
and taking the $q^4$-power and dividing by $\lambda^{q+1}$ 

$$\alpha^{q^3 \tau} \beta^{q \tau}-\bigg( 
\frac{ \alpha^{q^3}(\beta^{(q+1)}-\alpha^{q})}{\beta^{q} \alpha^{q^3+1}-1}
\bigg)^{q \tau} \alpha^{(q^3+q) \tau} + \bigg( 
\frac{ \alpha^{q^3}(\beta^{(q+1)}-\alpha^{q})}{\beta^{q} \alpha^{q^3+1}-1}
\bigg)^{(q+1)\tau}=0,$$
which is equivalent to
\begin{equation} \label{condSh}
\alpha^{q^4} \beta^{q^2+q+1}-\alpha^{q^4+q^2} \beta - \alpha^{q^4+q} \beta^{q^2} + N(\alpha)-\beta^q \alpha^{q^3+1}+1=0.
\end{equation}

Furthermore if the previous condition is satisfied, recalling our definiton of $\eta$, then $L_{\alpha,\beta}$ is maximum scattered if and only if 
$$N\bigg( \frac{ \alpha^{q^3}(\beta^{(q+1)}-\alpha^{q})}{\beta^{q} \alpha^{q^3+1}-1}
\bigg) \ne 1.$$

This proves the following lemma.

\begin{lemma} \label{equivSh}
A linear set $L_{\alpha,\beta}$ is equivalent to $L_{2}^\eta$ for some $\eta\in\Fqq$ if and only if
\textup{(\ref{condSh})} holds.

If this is the case then $L_{\alpha,\beta}$ is maximum scattered if and only if 
$$N\bigg( \frac{ \alpha^{q^3}(\beta^{(q+1)}-\alpha^{q})}{\beta^{q} \alpha^{q^3+1}-1}
\bigg) \ne 1.$$
\end{lemma}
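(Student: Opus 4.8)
The plan is to apply the equivalence criterion of \cite[Proposition 5.1 and Theorem 5.4]{CsMaPo2018a}, recalled in \eqref{eq}, specialised to $s=2$, and to reduce it to an algebraic solvability question for the matrix entries $A,B,C,D$ and the scalar $\eta$. First I would set $z=A(x^\tau-\alpha^\tau x^{q^2\tau})+B(x^{q\tau}-\beta^\tau x^{q^2\tau})$ and impose the identity $f_{2,\eta}(z)=\eta z^{q^2}+z^{q^3}=C(x^\tau-\alpha^\tau x^{q^2\tau})+D(x^{q\tau}-\beta^\tau x^{q^2\tau})$ in $x$. Viewing both sides as linearised polynomials and equating the coefficients of $x^{q^i\tau}$ for $i=0,\dots,4$ produces exactly the system \eqref{eqS}, so that the geometric equivalence becomes the solvability of \eqref{eqS} in $A,B,C,D,\eta$ for some automorphism $\tau$.

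The next step is elimination. From the fourth and third equations of \eqref{eqS} one gets $A^{q^3}=-\eta B^{q^2}$, hence $A^{q^2}=-\eta^{q^4}B^q$, and, using $\alpha\neq0$, $C=\eta^{q^4+1}B^q/\alpha^\tau$; substituting these turns \eqref{eqS} into the requirement that the two linearised polynomials $P_1(B),P_2(B)$ of \eqref{eqS2} share a nonzero root. Because $\alpha\beta\neq0$ and $\beta^{q+1}\neq\alpha^q$ (the latter holding since $L_{\alpha,\beta}$ is not of pseudoregulus type, Prop.\ \ref{equivPSE}), I would pass to the equivalent triangular form \eqref{eqS21} and then \eqref{eqS22}: a single $q^2$-linear equation together with a relation $-B^q+kB=0$, where $k$ is the displayed ratio in $\eta,\alpha,\beta$. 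The standard fact that a nonzero $B$ with $B^q=kB$ forces $N(k)=1$ supplies the norm condition in \eqref{eqS22}, which lets me write $\eta^{q^2}=\lambda\bigl(\alpha^{q^3}(\beta^{q+1}-\alpha^q)/(\beta^q\alpha^{q^3+1}-1)\bigr)^\tau$ with $N(\lambda)=1$, and then $B^q=\lambda^q B$.

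The crux is the final reduction. Substituting $B^q=\lambda^q B$ into $P_2(B)=0$ and taking the $q^4$-power, the powers of $\lambda$ assemble (using $\lambda^{q^5}=\lambda$) into a single factor $\lambda^{q+1}$, which I divide out, so all dependence on $\lambda$ disappears. What remains, written explicitly, is $\Phi(\alpha^\tau,\beta^\tau)=0$ with $\Phi$ the left-hand side of \eqref{condSh}; since $\tau$ commutes with the $q$-Frobenius and fixes the prime-field coefficients of $\Phi$, one has $\Phi(\alpha^\tau,\beta^\tau)=\Phi(\alpha,\beta)^\tau$, so the condition is equivalent to \eqref{condSh} and the dependence on $\tau$ also drops out. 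Reading the chain of equivalences backwards yields the converse: if \eqref{condSh} holds, choose $\lambda$ with $N(\lambda)=1$, then $\eta$ from the displayed formula, then a common root $B\neq0$, and finally $A,C,D$, realising the equivalence with $AD-BC\neq0$. I expect this collapse to \eqref{condSh} to be the main obstacle, since it is a lengthy elimination whose vanishing is most safely confirmed by a computer algebra check.

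Finally, for the maximum scattered statement I would use that Sheekey's $L_2^\eta$ is scattered precisely when $N(\eta)\neq1$ (Sect.\ 1). Taking the norm of $\eta^{q^2}=\lambda\bigl(\alpha^{q^3}(\beta^{q+1}-\alpha^q)/(\beta^q\alpha^{q^3+1}-1)\bigr)^\tau$ and using $N(\lambda)=1$ together with $N(\zeta^{q^2})=N(\zeta)$ and $N(\zeta^\tau)=N(\zeta)^\tau$ gives $N(\eta)=N\bigl(\alpha^{q^3}(\beta^{q+1}-\alpha^q)/(\beta^q\alpha^{q^3+1}-1)\bigr)^\tau$; since $\tau$ fixes $1$, this equals $1$ exactly when the displayed norm equals $1$. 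Hence $L_{\alpha,\beta}$ is maximum scattered if and only if that norm differs from $1$, as claimed.
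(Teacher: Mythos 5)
Your proposal is correct and follows essentially the same route as the paper's proof: the same coefficient-matching system \eqref{eqS}, the same elimination to the common-root condition \eqref{eqS2} and its triangular forms \eqref{eqS21}--\eqref{eqS22}, the same norm condition giving $\eta^{q^2}=\lambda g^\tau$ with $N(\lambda)=1$, the same collapse (after taking the $q^4$-power and dividing by $\lambda^{q+1}$) to \eqref{condSh}, and the same norm computation for the scatteredness criterion. Your explicit justifications that the $\tau$-dependence drops out via $\Phi(\alpha^\tau,\beta^\tau)=\Phi(\alpha,\beta)^\tau$ and that the chain of equivalences reverses to give the converse are refinements of steps the paper leaves implicit, not a different method.
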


\begin{remark}
It can be checked with MAGMA or GAP that using Lemmas \ref{equivLP} and \ref{equivSh} then no new maximum scattered linear sets of type $L_{\alpha,\beta}$ can be obtained for $q\le11$.
\end{remark}

From $N(\alpha)+1 \in \mathbb{F}_q$, we note that a necessary condition for   \eqref{condSh} to hold is that 
$$\alpha^{q^4} \beta^{q^2+q+1}-\alpha^{q^4+q^2} \beta - \alpha^{q^4+q} \beta^{q^2} -\beta^q \alpha^{q^3+1}=\alpha \beta^{q^3+q^2+q}-\alpha^{q^3+1} \beta^q - \alpha^{q^2+1} \beta^{q^3} -\beta^{q^2} \alpha^{q^4+q},$$
which is equivalent to
$$\alpha^{q^4} \beta (\beta^{q^2+q}-\alpha^{q^2})=\alpha \beta^{q^3}(\beta^{q^2+q}-\alpha^{q^2}).$$
Since $\beta^{q+1} \ne \alpha^q$ from Lemma \ref{equivPSE} we get $\alpha^{q^4} \beta-\alpha \beta^{q^3}=0$ and hence
\begin{equation} \label{sh1}
\alpha^q/\beta^{q+1} \in \mathbb{F}_{q}^*.
\end{equation}

Hence let $\alpha^q/\beta^{q+1}=\lambda \in \mathbb{F}_{q}^*$. In this case   \eqref{condSh} reads
\begin{equation} \label{sh2}
\lambda^5N(\beta)^2+\lambda(1-3\lambda)N(\beta)+1=0.
\end{equation}

Thus, from Lemma \ref{equivSh} if $\alpha$ and $\beta$ satisfy   \eqref{sh1} and \eqref{sh2} $L_{\alpha,\beta}$ is equivalent to $L_{2}^\eta$ with $\eta=\alpha^{q^3}(\beta^{(q+1)}-\alpha^{q})/(\beta^q \alpha^{q^3+1}-1)$.
It follows that $L_{\alpha,\beta}$ is maximum scattered if and only if 
$$N(\eta) =N\bigg( \frac{\alpha^{q^3}(\beta^{(q+1)}-\alpha^{q})}{\beta^q \alpha^{q^3+1}-1}\bigg)= N\bigg(\frac{\lambda \beta^{q^3+q^2}(\beta^{q+1}-\lambda \beta^{q+1})}{\beta^q \lambda^2 \beta^{q^4+q^3+q^2+1}-1}\bigg)=$$
$$N(\beta)^4 \cdot N\bigg( \frac{\lambda(1-\lambda)}{\lambda^2N(\beta)-1}\bigg)\ne 1.$$
Since $\lambda \in \mathbb{F}_q^*$ we get that equivalently
\begin{equation} \label{ms}
\lambda^5(1-\lambda)^5 N(\beta)^4 \ne (\lambda^2 N(\beta)-1)^5.
\end{equation}
Computing the resultant of the polynomials $\lambda^5(1-\lambda)^5 Y^4-(\lambda^2 Y-1)^5$ and $\lambda^5 Y^2+\lambda(1-3\lambda)Y+1$ with respect to $Y$ we get $\lambda^{14}(\lambda-1)^{10}$. Hence if $\lambda^5(1-\lambda)^5 N_{q^5/q}(\beta)^4= (\lambda^2 N_{q^5/q}(\beta)-1)^5$ then either $\lambda=0$ or $\lambda=1$. If $\lambda=0$ then $\alpha=0$,
contradicting \eqref{condSh}. 
If $\lambda=1$ then $L_{\alpha,\beta}$ is of pseudoregulus type, a contradiction. 

Thus the following remark holds.

\begin{remark} \label{remSh}
A  linear set of type $L_{\alpha,\beta}$ is equivalent to $L_{2}^\eta$ if and only if $\alpha^q/\beta^{q+1}=\lambda \in \mathbb{F}_{q}^* \setminus \{1\}$ and \eqref{sh2} holds. If it is the case then $L_{\alpha,\beta}$ is maximum scattered.

\end{remark}

Summarizing, the following theorem collects all the possibile equivalences of maximum scattered linear sets of type $L_{\alpha,\beta}$ and known linear sets.

\begin{theorem} \label{equivAll}
Let $L_{\alpha,\beta}$ be scattered, $\alpha\beta\neq0$, $\alpha^q\neq\beta^{q+1}$.
Then
\begin{itemize}
\item  $\lambda=\alpha^q/\beta^{q+1}\in\Fq$;
\item $L_{\alpha,\beta}$ is equivalent (up to collineations) 
neither to $L_{1}^\eta$ for any $\eta$, nor to a linear set of pseudoregulus type;
\item $L_{\alpha,\beta}$ is equivalent to the Sheekey linear set $L_{2}^\eta$ for some $\eta$
if and only if $\lambda^5\N(\beta)^2+\lambda(1-3\lambda)\N(\beta)+1=0$; so,
\item
if $\lambda^5\N(\beta)^2+\lambda(1-3\lambda)\N(\beta)+1\neq0$, 
then $L_{\alpha,\beta}$ is of a new type; this does not occur for $q\le11$.
\end{itemize}
\end{theorem}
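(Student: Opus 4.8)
The plan is to assemble the four assertions from results already established, since under the standing hypotheses $\alpha\beta\neq0$ and $\alpha^q\neq\beta^{q+1}$ each is the specialization of an earlier statement.

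First I would observe that, because $\alpha^q\neq\beta^{q+1}$, Proposition~\ref{equivPSE}(i) rules out rank less than five, so a scattered $L_{\alpha,\beta}$ is automatically maximum scattered. Since moreover $\beta\neq0$, the contrapositive of Proposition~\ref{nonMS} forbids $\alpha^q/\beta^{q+1}\in\Fqq\setminus\Fq$; this gives the first bullet, $\lambda=\alpha^q/\beta^{q+1}\in\Fq$. As $\alpha\neq0$ we in fact get $\lambda\in\Fq^*$, and $\alpha^q\neq\beta^{q+1}$ forces $\lambda\neq1$, so $\lambda\in\Fq^*\setminus\{1\}$.

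For the second bullet, Proposition~\ref{equivPSE}(ii) states directly that $L_{\alpha,\beta}$ is not of pseudoregulus type whenever $\alpha^q\neq\beta^{q+1}$, and Lemma~\ref{equivLP}, which characterizes equivalence to $L_1^\eta$ by the condition $\beta=0$, excludes the Lunardon--Polverino case because here $\beta\neq0$. For the third bullet I would invoke Lemma~\ref{equivSh} (equivalently Remark~\ref{remSh}): having already placed $\lambda$ in $\Fq^*\setminus\{1\}$, the equivalence criterion \eqref{condSh} reduces through \eqref{sh1}--\eqref{sh2} to the single equation $\lambda^5\N(\beta)^2+\lambda(1-3\lambda)\N(\beta)+1=0$, so $L_{\alpha,\beta}$ is equivalent to a Sheekey set $L_2^\eta$ precisely when this polynomial vanishes.

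Finally, the fourth bullet is a synthesis: the introduction records pseudoregulus, Lunardon--Polverino ($L_1^\eta$) and Sheekey ($L_2^\eta$) linear sets as the only MSLS types in $\PG(1,q^5)$ currently known, and I have excluded the first two unconditionally and the third exactly when the displayed polynomial is nonzero; hence any $L_{\alpha,\beta}$ for which it is nonzero is of a new type. The claim that this cannot happen for $q\le11$ is the exhaustive computation recorded in the remark following Lemma~\ref{equivSh}. I do not anticipate a genuine analytic obstacle at this stage, because the real work is front-loaded into Proposition~\ref{nonMS} (the irreducibility of the quartic $\cQ$ together with the Hasse--Weil estimate) and into the explicit matrix computations behind Lemmas~\ref{equivLP} and~\ref{equivSh}; the only delicate points are confirming that $\lambda$ genuinely lands in $\Fq^*\setminus\{1\}$ so that Remark~\ref{remSh} applies verbatim, and relying on the completeness of the three-type list to justify the \emph{new type} conclusion.
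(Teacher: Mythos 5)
Your proposal is correct and takes essentially the same approach as the paper: there the theorem carries no separate proof but is introduced with ``Summarizing, \dots'' as exactly the assembly you describe, namely Proposition \ref{nonMS} (via its contrapositive) for the first bullet, Proposition \ref{equivPSE}(ii) and Lemma \ref{equivLP} for the second, Lemma \ref{equivSh} with \eqref{sh1}--\eqref{sh2} (Remark \ref{remSh}) for the third, and the known-types list plus the computational remark for $q\le11$ for the fourth. Your explicit glue steps --- that $\alpha^q\neq\beta^{q+1}$ forces rank five so scattered means maximum scattered, and that $\lambda\in\Fq^*\setminus\{1\}$ so Remark \ref{remSh} applies --- are precisely the points the paper uses implicitly.
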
  

\begin{remark}
Even though every maximum scattered linear set either of pseudoregulus type or of Lunardon-Polverino type is $L_{\alpha,\beta}$ for some $\alpha,\beta \in \mathbb{F}_{q^5}$, the same statement is not true in general for Sheekey linear sets $L_{2}^\eta$ with $N(\eta) \ne 1$. 

Indeed let $\eta \in \mathbb{F}_{q^5}^*$ such that $N(\eta)^2-N(\eta)+1=0$. 
This implies that $q \not\equiv 2 \pmod 3$. From Theorem \ref{equivAll} we want to show that there are no $\lambda \in \mathbb{F}_q^* \setminus \{1\}$ and $\beta \in \mathbb{F}_{q^5}^*$ such that

$$\begin{cases}\eta=\frac{\lambda \beta^{q^3+q^2}(\beta^{q+1}-\lambda \beta^{q+1})}{\beta^q \lambda^2 \beta^{q^4+q^3+q^2+1}-1}, \\  \lambda^5N(\beta)^2+\lambda(1-3\lambda)N(\beta)+1=0.\end{cases}$$

Suppose by contradiction that $\eta=\frac{\lambda \beta^{q^3+q^2}(\beta^{q+1}-\lambda \beta^{q+1})}{\beta^q \lambda^2 \beta^{q^4+q^3+q^2+1}-1}$ and $\lambda^5N(\beta)^2+\lambda(1-3\lambda)N(\beta)+1=0$. Then as in \eqref{ms}, we have that 
$$N(\eta)=N(\beta)^4 \cdot N_{q^5/q}\bigg( \frac{\lambda(1-\lambda)}{\lambda^2N(\beta)-1}\bigg)=N(\beta)^4 \frac{\lambda^5 (1-\lambda)^5}{((\lambda^2N(\beta)-1)^5}$$
and hence $N(\eta)^2-N(\eta)+1=0$ implies
$$N(\beta)^8 \lambda^{10} (1-\lambda)^{10}-N(\beta)^4 \lambda^5(1-\lambda)^5(\lambda^2 N(\beta)-1)^5+(\lambda^2 N(\beta)-1)^{10}=0.$$
Since the resultant of the polynomials $P_1(\lambda,N)=N^8 \lambda^{10} (1-\lambda)^{10}-N^4 \lambda^5(1-\lambda)^5(\lambda^2 N-1)^5+(\lambda^2 N-1)^{10}$ and $P_2(\lambda,N)=\lambda^5N^2+\lambda(1-3\lambda)N+1$ with respect to $N$ is  $\lambda^{28}(\lambda-1)^{22}$ and $\lambda \in \mathbb{F}_q^* \setminus \{1\}$ we have a contradiction. From Proposition \ref{height Sh} the cases $N(\eta)^2-N(\eta)+1=0$ are exactly those for which $\Lambda \cap \Lambda^\sigma$ has not height four. This explicit construction is hence consistent with Proposition \ref{height4}.
\end{remark}

We end this section with the following question.
\begin{question}
It has been proven in Proposition \ref{nonMS} that 
for $\beta\ne0$ $\alpha^q/\beta^{q+1} \in \mathbb{F}_{q}$ is a necessary condition for $L_{\alpha,\beta}$ to be maximum scattered. From Lemma \ref{equivLP}, $L_{\alpha,\beta}$ is equivalent to $L_1^{\eta}$ for some $\eta$ with $N(\eta) \ne 1$ if and only if $\beta=0$, while if $\alpha^q/\beta^{q+1}=1$ then $L_{\alpha,\beta}$ is of pseudoregulus type. From Remark \ref{remSh} $L_{\alpha,\beta}$ is equivalent to $L_{2,\eta}$ for some $\eta$ with $N(\eta) \ne 1$ if and only if  $\lambda=\alpha^q/\beta^{q+1} \in \mathbb{F}_{q}^* \setminus \{1\}$ and \eqref{sh2} holds. Is it true that $L_{\alpha,\beta}$ with $\alpha^q/\beta^{q+1} \in \mathbb{F}_{q}^* \setminus \{1\}$ is maximum scattered if and only if it is equivalent to $L_2^{\eta}$ for some $N(\eta) \ne 1$? If the answer to this question is negative then the family of $L_{\alpha,\beta}$ contains new maximum scattered linear sets. Otherwise, it would provide a new characterization of the known maximum scattered linear sets in $\PG(1,q^5)$.
\end{question}

\bigskip

\noindent
Maria Montanucci\\
Technical University of Denmark\\
Asmussens All\'e\\
Building 303B, room 150\\
2800 Kgs. Lyngby\\
Denmark

\bigskip

\noindent
Corrado Zanella\\
Dipartimento di Tecnica e Gestione dei Sistemi Industriali\\
Universit\`a degli Studi di Padova\\
Stradella S. Nicola, 3\\
36100 Vicenza VI\\
Italy

\end{document}